\newtheorem{theorem}{Theorem}[section]
\newtheorem{corollary}[theorem]{Corollary}
\newtheorem{lemma}[theorem]{Lemma}
\theoremstyle{definition}
\newtheorem{definition}[theorem]{Definition}
\newtheorem{example}[theorem]{Example}
\newtheorem{remark}[theorem]{Remark}
\theoremstyle{plain}
\theoremstyle{definition}
\numberwithin{equation}{section}
\begin{document}

\title[On centrally extended Jordan derivations]{On centrally extended Jordan derivations and related maps in rings}
\author[ B. Bhushan, G. S. Sandhu, S. Ali and D. Kumar
]{ B. Bhushan, G. S. Sandhu, S. Ali and D. Kumar
}

\address{Department of Mathematics, Faculty of Physical Sciences, Punjabi University, Patiala-147002, India.}
\email{bharat\_rs18@pbi.ac.in}
\address{Department of Mathematics, Patel Memorial National College, Rajpura-140401, India.}
\email{gurninder\_rs@pbi.ac.in}
\address{Department of Mathematics, Aligarh Muslim University, Aligarh-202002, India.}
\email{shakir50@rediffmail.com}
\address{Department of Mathematics, Faculty of Physical Sciences, Punjabi University, Patiala-147002, India.}
\email{deep\_math1@yahoo.com}

\keywords{Prime ring, semiprime ring, involution, centrally extended Jordan derivation, centrally extended Jordan $\ast-$derivation\\
2020 Mathematical subject classification: 16W10, 16N60, 16W25}
\begin{abstract}
Let $R$ be a ring and $Z(R)$ be the center of $R.$ The aim of this paper is to define the notions of centrally extended Jordan derivations and centrally extended Jordan $\ast$-derivations, and to prove some results involving these mappings. Precisely, we prove that if a $2$-torsion free noncommutative prime ring $R$ admits a centrally extended Jordan derivation (resp. centrally extended Jordan $\ast$-derivation) $\delta:R\to R$ such that
\[
[\delta(x),x]\in Z(R)~~(resp.~~[\delta(x),x^{\ast}]\in Z(R))\text{~for~all~}x\in R,
\]
 where $'\ast'$ is an involution on $R,$ then $R$ is an order in a central simple algebra of dimension at most 4 over its center. 
\end{abstract}
\maketitle

\section{Introduction and notions}
Throughout this paper, $R$ denotes an associative ring with center $Z(R)$. The maximal right ring of quotients of $R$ is denoted by $Q_{mr}(R)$ and the center of $Q_{mr}(R)$ is called the extended centroid of $R$ and denoted by $C,$ more information about these object can be found in \cite{Beidar1996}. For any $x, y\in R$, the symbol $[x,y]$ (resp. $x \circ y$) denotes the commutator (resp. anti-commutator)  $xy-yx$ (resp. $xy+yx$). A ring $R$ is called \emph{prime}, if for any $a,b\in R$, $aRb=(0)$ implies either $a=0$ or $b=0,$ and if $aRa=(0)$ implies $a=0,$ then $R$ is called a \emph{semiprime} ring. For any $n\in\mathbb{Z}^{+},$ $R$ is called $n$-torsion free if $nx=0$ for all $x\in R,$ implies $x=0$. An anti-automorphism $'\ast'$ of a ring $R$ is called \emph{involution} if it is of period $2$. By a \emph{ring with involution}, we mean a ring equipped with an involution $'\ast',$ it is also called $\ast$-ring.
Let $H(R):=\{x\in R: x^{\ast}=x\}$ and $S(R):=\{x\in R: x^{\ast}=-x\};$ the elements of $H(R)$ are called \emph{symmetric} and the elements of $S(R)$ are called \emph{skew-symmetric}. Following Herstein \cite[Ch. 6]{Herstein76}, $\overline{H(R)}$ will denote the ring generated by the symmetric elements of $R$.
\par
An additive mapping $d:R\to R$ is called a \emph{derivation} if $d(xy)=d(x)y+xd(y)$ for all $x,y\in R.$ For a fixed element $a\in R,$ a mapping $x\mapsto [a,x]$ is called \emph{inner derivation} induced by $'a'$. An additive map $d$ is called a \emph{Jordan derivation} if $d(x^{2})=d(x)x+xd(x)$ for all $x \in R$. Obviously, every derivation is a Jordan derivation but the converse need not be true (see \cite[Example 3.2.1]{Ashraf2006}). Moreover, the question that ``when a Jordan derivation is a derivation?" caused a new and significant area of research (see \cite{Ashraf2000,Bresar88,bresar1988,herstein1957,Zalar1991}). In 1957, Herstein \cite{herstein1957} showed that for 2-torsion free prime rings , every Jordan derivation is an ordinary derivation. Later, Bre$\breve{s}$ar and Vukman \cite{Bresar88} gave a brief and elegant proof of this result. In the same year, Bre$\breve{s}$ar \cite{bresar1988} showed that for a rather wide class of rings, namely semiprime rings with 2-torsion free condition, every Jordan derivation is a derivation. Thenceforth a considerable amount of results has been proved in this direction.
\par
Let $R$ be a $\ast$-ring. An additive mapping $d:R \to R$ is called a \emph{$\ast$-derivation} if $d(xy)=d(x)y^{\ast}+xd(y)$ for all $x,y\in R$ and is called a \emph{Jordan $\ast$-derivation} if $d(x^{2})=d(x)x^{\ast}+xd(x)$ for all $x \in R$. The notions of $\ast$-derivation and Jordan $\ast$-derivation are first mentioned in \cite{bresar89}. Note that the mapping $x \to ax^{\ast}-xa$, where $a$ is a fixed element of $R$, is a Jordan $\ast$-derivation which is known as \emph{inner Jordan $\ast$-derivation}. The study of Jordan $\ast$-derivations has been originated from the problem of representability of quadratic forms by bilinear forms (see \cite{Semrl90,Semrl91}). Since then there has been a significant interest in the study of algebraic structure of Jordan $\ast$-derivations in rings and algebras, for a good cross-section we refer the reader to \cite{Ali2013,Bresar92,Lee2014,Lee2015}. For further generalizations and recent results, see \cite{Dar2021}.

\par Let $S$ be a subset of $R,$ a mapping $\varphi$ is called centralizing (resp. commuting) on $S$, if $[f(x),x] \in Z(R)$ (resp. $[f(x),x]=0$) for all $x \in S$.
The study of commuting and centralizing mappings goes back to 1955, when Divinsky \cite{Divinsky1955} proved that a simple artinian ring is commutative if it admits a commuting nontrivial automorphism. In this line of investigation, Posner  \cite{posner1957} proved another remarkable result which states that if there exists a nonzero centralizing derivation on $R,$ then $R$ must be commutative. Motivated by the centralizing and commuting mappings, Ali and Dar \cite{Ali2014} introduced $\ast$-centralizing and $\ast$-commuting mappings and defined as follows: a mapping $\varphi$ is called $\ast$-centralizing (resp. $\ast$-commuting) on a set $S$ if $[f(x),x^{\ast}] \in Z(R)$ (resp. $[f(x),x^{\ast}]=0$) for all $x\in S.$

\par There has been a rising literature on the investigation of centrally extended mappings in rings under various settings; for e.g. see \cite{bell18}, \cite{El-Deken2019}, \cite{El-Deken2020}, \cite{Muthana2020}. Continuing in this line of investigation, in this paper we introduce centrally extended Jordan derivations and give examples to show the existence of these maps in a $2$-torsion free prime rings. We also show that there exists no nonzero centrally extended Jordan derivation $\delta$ on a $2$-torsion free noncommutative prime ring (resp. prime ring with involution $'\ast'$) satisfying $[\delta(x),x]\in Z(R)$ (resp. $[\delta(x),x^{\ast}]\in Z(R)$) for all $x\in R,$ unless $R$ is an order in a central simple algebra of dimension $4$ over its center. Finally, we give the notion of centrally extended Jordan $\ast$-derivation and provide the analogous studies.

\section{Preliminaries}
By $s_{4},$ we denote the standard identity in four noncommuting variables, which is defined as follows:
\[
s_{4}(x_{1},x_{2},x_{3},x_{4})=\sum_{\sigma\in S_{4}}(-1)^{\sigma}x_{\sigma(1)}x_{\sigma(2)}x_{\sigma(3)}x_{\sigma(4)},
\]
where $S_{4}$ is the symmetric group of degree $4$ and $(-1)^{\sigma}$ is the sign of permutation $\sigma\in S_{4}.$ It is known by the standard PI-theory that, a prime ring $R$ satisfying $s_{4}$ can be characterized in a number of ways, as follows:
\begin{lemma}\cite[Lemma 1]{Bresar93}\label{F-1}
Let $R$ be a prime ring with extended centroid $C$. Then the following statements are equivalent:
\begin{itemize}
	\item[(i)] $R$ satisfies $s_{4}.$
	\item[(ii)] $R$ is commutative or $R$ embeds into $M_{2}(F),$ for a field $F.$
	\item[(iii)] $R$ is algebraic of bounded degree $2$ over $C$ (i.e., for any $a\in R,$ there exists a polynomial $x^{2}+\alpha x+\beta\in C[x]$ satisfied by $a$).
	\item[(iv)] $R$ satisfies $[[x^{2},y],[x,y]].$
\end{itemize}
\end{lemma}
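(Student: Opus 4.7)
The plan is to prove the equivalence by the cycle $(\mathrm{i})\Rightarrow(\mathrm{ii})\Rightarrow(\mathrm{iii})\Rightarrow(\mathrm{iv})\Rightarrow(\mathrm{i})$, where conditions (ii) and (iii) are the structural and arithmetic shadows of the polynomial identity $s_{4}$, while (iv) is a weaker-looking identity of degree five whose equivalence with $s_{4}$ is the true content of the lemma.

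For $(\mathrm{i})\Rightarrow(\mathrm{ii})$ I would invoke Posner's theorem together with Kaplansky's classification: a prime ring satisfying $s_{4}$ is a PI-ring with a classical (symmetric) ring of quotients $RC$ that is central simple over $C$ of PI-degree two, so either $R$ is commutative or $RC$ becomes $M_{2}(F)$ after passage to a splitting field $F\supseteq C$, giving $R\hookrightarrow RC\hookrightarrow M_{2}(F)$. The converse $(\mathrm{ii})\Rightarrow(\mathrm{i})$ is the Amitsur--Levitzki theorem, since $s_{4}$ is inherited by subrings. For $(\mathrm{ii})\Rightarrow(\mathrm{iii})$, Cayley--Hamilton in $M_{2}(F)$ gives every element a quadratic relation $a^{2}-\mathrm{tr}(a)\,a+\det(a)=0$; one then checks that for $a\in R$ both coefficients can be replaced by elements of the extended centroid $C$, using that $RC$ is already a $4$-dimensional central simple $C$-algebra. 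For $(\mathrm{iii})\Rightarrow(\mathrm{iv})$, if $a^{2}+\alpha a+\beta=0$ with $\alpha,\beta\in C$, then for every $y\in R$ one has $[a^{2},y]=-\alpha[a,y]$, so $[[a^{2},y],[a,y]]=-\alpha[[a,y],[a,y]]=0$.

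The main obstacle is $(\mathrm{iv})\Rightarrow(\mathrm{i})$, and I would approach it through Martindale's generalized polynomial identity theorem: after suitable partial linearisation in $x$, the identity $[[x^{2},y],[x,y]]$ becomes a nontrivial multilinear GPI on $R$, hence $RC$ contains a minimal right ideal whose endomorphism ring is a division algebra $D$ of finite dimension over $C$, and $RC\cong M_{n}(D)$ for some $n\ge 1$. The crux is then to substitute matrix units $e_{11},e_{12}$ and elements $d\,e_{ij}$ with $d\in D$ into the identity and argue that the resulting relations force $n\le 2$ and $D=C$; consequently $RC\subseteq M_{2}(C)$, which satisfies $s_{4}$ by Amitsur--Levitzki and so (i) holds for $R$. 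This final GPI-reduction is where the bookkeeping is heaviest; every other implication reduces to a one-line calculation or a standard appeal to PI-theory.
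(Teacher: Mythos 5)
The paper itself does not prove this lemma at all --- it simply quotes it from \cite{Bresar93} --- so there is no internal argument to compare against; your sketch has to stand on its own. Most of it does: (i)$\Rightarrow$(ii) via Posner's theorem for prime PI-rings plus passage to a splitting field, (ii)$\Rightarrow$(i) via Amitsur--Levitzki and inheritance by subrings, (ii)$\Rightarrow$(iii) via the reduced characteristic polynomial of the central simple algebra $RC$, and the one-line computation for (iii)$\Rightarrow$(iv) (using that $\alpha,\beta\in C$ centralize $R$, so $[a^{2},y]=-\alpha[a,y]$) are all sound, standard reductions.

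The genuine flaw is in the endpoint you set for (iv)$\Rightarrow$(i). You claim the matrix-unit bookkeeping will force $n\le 2$ \emph{and} $D=C$, hence $RC\subseteq M_{2}(C)$. That target is unattainable because it is false: a quaternion division algebra $Q$ with center $C$ is algebraic of degree $2$ over $C$, so by your own implication (iii)$\Rightarrow$(iv) it satisfies $[[x^{2},y],[x,y]]$, yet there $n=1$, $D=Q\neq C$, and $Q$ does not embed in $M_{2}(C)$. What the argument can (and only needs to) force is $\dim_{C}RC\le 4$, i.e.\ PI-degree at most $2$; one then splits, $RC\otimes_{C}F\cong M_{2}(F)$, and Amitsur--Levitzki gives $s_{4}$. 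Moreover, the whole GPI detour is unnecessary: $[[x^{2},y],[x,y]]$ is already an ordinary polynomial identity of degree $5$, and it is nonzero in the free ring (the monomial $x^{2}yxy$ occurs with coefficient $1$ and cannot cancel), so $R$ is a prime PI-ring; Posner's theorem makes $RC$ central simple of dimension $m^{2}$ over $C$, and since $M_{m}(F)$ satisfies no identity of degree less than $2m$, the degree-$5$ identity forces $2m\le 5$, i.e.\ $m\le 2$ --- no Martindale theorem and no matrix-unit substitutions are needed. With that correction (replace ``$n\le 2$ and $D=C$'' by ``$\dim_{C}RC\le 4$'' and split), your cycle of implications closes correctly.
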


\begin{lemma}\cite[Lemma 2.2]{Ali2014}\label{Fact-2}
Let $R$ be a $2$-torsion free semiprime ring with involution $'\ast'$. If an additive self-mapping $f$ of $R$ satisfies $[f(x),x^{\ast}]\in Z(R)$ for all $x\in R,$ then $[f(x),x^{\ast}]=0$ for all $x\in R.$
\end{lemma}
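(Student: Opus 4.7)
The plan is to reduce the claim to Bre\v{s}ar's classical centralizing-to-commuting theorem for additive self-maps of a $2$-torsion free semiprime ring. The key observation is the symmetry supplied by the involution: since $\ast$ has period $2$, substituting $x^{\ast}$ for $x$ in the hypothesis $[f(x),x^{\ast}]\in Z(R)$ gives
\[
[f(x^{\ast}),x]\in Z(R)\qquad\text{for every }x\in R.
\]

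Define $g:R\to R$ by $g(x):=f(x^{\ast})$. Since both $f$ and $\ast$ are additive, so is $g$, and the displayed inclusion reads $[g(x),x]\in Z(R)$ for all $x$; that is, $g$ is a centralizing additive self-map of the $2$-torsion free semiprime ring $R$. Bre\v{s}ar's centralizing-to-commuting theorem then forces $[g(x),x]=0$, i.e.\ $[f(x^{\ast}),x]=0$ for all $x\in R$. Substituting $x\mapsto x^{\ast}$ one last time yields $[f(x),x^{\ast}]=0$ for every $x\in R$, which is the desired conclusion.

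The one step that is not purely formal is the invocation of the centralizing-to-commuting theorem; that is where the $2$-torsion-free and semiprime hypotheses actually do some work, and I expect it to be the main obstacle. In a self-contained derivation one would linearize the hypothesis to obtain the biadditive identity $[f(x),y^{\ast}]+[f(y),x^{\ast}]\in Z(R)$, commute this central expression with elements of the form $x^{\ast}r$ (for $r\in R$) and use the Leibniz rule for the bracket together with the centrality of $[f(x),x^{\ast}]$ to produce an element $a(x,y)\in R$ with $a(x,y)R=0$. Since $a R a\subseteq a R=0$, semiprimeness forces $a(x,y)=0$, and after suitable specialization (for instance $y=x$) and repeated use of $2$-torsion-freeness this collapses onto the desired identity $[f(x),x^{\ast}]=0$. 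Alternatively, one may first specialize $y=x^{\ast}$ in the linearized identity and invoke the hypothesis for $x^{\ast}$ to reduce to the ordinary centralizing condition $[f(x),x]\in Z(R)$, to which the classical argument applies verbatim.
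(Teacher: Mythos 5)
Your main argument is correct and is essentially the standard route; note that the paper does not prove this lemma at all but simply imports it from Ali and Dar, so your reduction is a welcome substitute. Concretely: since $\ast$ is additive and of period two, $g(x):=f(x^{\ast})$ is an additive self-map with $[g(x),x]\in Z(R)$ for all $x\in R$, so the centralizing-implies-commuting result quoted in the paper as Lemma \ref{Fact-1} (with $U=R$) gives $[g(x),x]=0$, and replacing $x$ by $x^{\ast}$ yields $[f(x),x^{\ast}]=0$, exactly as you say. One caveat about your closing ``alternatively'' remark: it is not valid. Specializing $y=x^{\ast}$ in the linearized inclusion gives $[f(x),x]+[f(x^{\ast}),x^{\ast}]\in Z(R)$, not $[f(x),x]\in Z(R)$, and in fact the hypothesis does not force $f$ to be centralizing: take $R=M_{2}(F)$ with the transpose involution and $f(x)=x^{\ast}$; then $[f(x),x^{\ast}]=0$ for every $x$, while $[f(x),x]=[x^{\ast},x]$ fails to be central (e.g.\ for $x=e_{12}$). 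Since that remark is only an aside and your primary argument does not depend on it, the proof as a whole stands.
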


\begin{lemma}\cite[Proposition 2.1.7 (ii)]{Beidar1996}\label{L-2}
Let $R$ be a prime ring, $Q_{mr}(R)$ be the maximal right ring of quotients of $R$ and $\mathcal{D}$ be the set of all right dense ideals of $R.$ Then for all $q\in Q_{mr}(R),$ there exists $J\in \mathcal{D}$ such that $qJ\subseteq R.$
\end{lemma}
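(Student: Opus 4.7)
The plan is to derive this directly from the construction of $Q_{mr}(R)$ as a Johnson--Utumi--Lambek maximal ring of right quotients. Recall that $Q_{mr}(R)$ is built as a direct limit of abelian groups
\[
Q_{mr}(R) \;=\; \lim_{\longrightarrow} \, \Hom_R(J, R_R),
\]
with $J$ ranging over the filter $\mathcal{D}$ of right dense ideals of $R$ and with transition maps given by restriction along the inclusions $J' \subseteq J$. The ring structure and the canonical embedding $R \hookrightarrow Q_{mr}(R)$ are set up so that multiplication in $Q_{mr}(R)$ matches evaluation of the representing partial right $R$-module maps into $R$.

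Given $q \in Q_{mr}(R)$, the first step is to pick an $R$-linear representative $f \colon J \to R$ for $q$; such a pair $(J,f)$ exists by the very definition of the direct limit. The second step is to observe that, because the ring operations on $Q_{mr}(R)$ are defined via composition and evaluation of these partial homomorphisms, the product $qj$ computed inside $Q_{mr}(R)$ coincides with $f(j) \in R$ for every $j \in J$. Taking the same $J$ as in the representative of $q$ then immediately yields $qJ \subseteq R$.

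The main obstacle is not the short argument above but rather verifying that the scaffolding of the construction works as claimed: one must check that $\mathcal{D}$ is closed under finite intersection (so the direct system is genuinely filtered), that the canonical map $R \hookrightarrow Q_{mr}(R)$ sending $r$ to the class of left multiplication by $r$ is a well-defined ring monomorphism, and that evaluation is compatible with the transition maps (so that the identification $qj = f(j)$ is independent of the choice of representative). Once these standard facts are in place---they are developed in detail in \cite{Beidar1996}---the assertion of the lemma is essentially tautological, and my plan is to defer to that development rather than reconstruct the full machinery of the maximal right ring of quotients.
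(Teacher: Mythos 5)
Your proposal is correct and takes essentially the same route as the source the paper relies on: the paper gives no proof of this lemma, quoting it directly from Beidar--Martindale--Mikhalev (Proposition 2.1.7(ii)), where the property $qJ\subseteq R$ is verified exactly as you describe, by choosing a representative $f\colon J\to R$ of $q$ in the direct-limit construction of $Q_{mr}(R)$ over the filter of dense right ideals and noting that $qj=f(j)\in R$ for all $j\in J$. Deferring the verification of the scaffolding (filteredness of $\mathcal{D}$, the embedding $R\hookrightarrow Q_{mr}(R)$, compatibility of evaluation with the transition maps) to that reference is appropriate here.
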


\begin{lemma}\cite[Proposition 3.1]{bresar1993}\label{Fact-1}
Let $R$ be a $2$-torsion free semiprime ring and $U$ be a Jordan subring of $R.$ If an additive self-mapping $f:R\to R$ satisfies $[f(x),x]\in Z(R)$ for all $x\in U,$ then $[f(x),x]=0$ for all $x\in U.$
\end{lemma}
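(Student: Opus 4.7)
Let $c(x) := [f(x), x]$ for $x \in U$; by hypothesis $c(x) \in Z(R)$, and the goal is $c(x) = 0$. My plan is to linearize, apply Jordan substitutions inside $U$, and finish by semiprimeness. First, replacing $x$ by $x + y$ with $x, y \in U$ (using that $U$ is an additive subgroup), additivity of $f$ gives
\[
c(x+y) - c(x) - c(y) = [f(x), y] + [f(y), x],
\]
and the left side is a difference of central elements, yielding the symmetric biadditive polarization
\[
B(x, y) := [f(x), y] + [f(y), x] \in Z(R), \qquad x, y \in U.
\]

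The second step exploits the Jordan closure of $U$: for $x, y \in U$ we have $x \circ y \in U$, and iterated Jordan substitutions (combined with the Jordan identity $x \circ (x \circ y) - y \circ x^2 = 2xyx$ and 2-torsion freeness) produce further symmetric elements of $U$. Substituting such elements into $B(x, \cdot)$ and simplifying via the centrality identity
\[
[f(x), x \circ y] = x \circ [f(x), y] + 2 c(x) y,
\]
which follows directly from $c(x) \in Z(R)$, one extracts a chain of sharper relations. The target of this manipulation is the annihilation $c(x)^2 = 0$; once this is in hand, centrality of $c(x)$ gives $c(x) R c(x) = R c(x)^2 = 0$, and semiprimeness of $R$ forces $c(x) = 0$, as required.

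The main obstacle lies squarely in the middle step. Because $f$ is merely additive, there is no Leibniz-type rule available for $f(x^2)$ or $f(x \circ y)$, so one cannot push $f$ through products to cancel terms. The argument has to be carried out entirely at the level of the polarization $B$, combining its symmetry, the centrality of its values on $U$, and the Jordan substitutions above in a Brešar-style computation for commuting traces of biadditive mappings on semiprime rings. This biadditive-trace analysis is the technical heart of the present lemma, and is where any concrete proof has to invest its effort.
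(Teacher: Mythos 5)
Your proposal has a genuine gap: it is an outline, not a proof. The polarization $B(x,y)=[f(x),y]+[f(y),x]\in Z(R)$ and the identity $[f(x),x\circ y]=x\circ[f(x),y]+2c(x)y$ are correct, and the endgame is fine (once $c(x)^2=0$ is known, centrality of $c(x)$ and semiprimeness give $c(x)Rc(x)=Rc(x)^2=0$, hence $c(x)=0$; equivalently, the centre of a semiprime ring has no nonzero nilpotents). But the decisive middle step --- the ``chain of sharper relations'' that is supposed to produce $c(x)^2=0$ --- is never exhibited; you name it as the place where ``any concrete proof has to invest its effort,'' which is an admission that the effort has not been invested here. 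As written, nothing gets past $B(x,x\circ y)\in Z(R)$: for instance, commuting this central element with $x$ yields $[[f(x\circ y),x],x]=-[x\circ[f(x),y],x]-2c(x)[y,x]$, and one must still eliminate the unknown quantity $f(x\circ y)$; doing so is exactly the nontrivial Bre\v{s}ar-style computation on commuting traces of biadditive maps, not a routine simplification, and it cannot be checked because it is absent. A further technical point you would need to handle in that computation: $x^2$ itself need not lie in $U$, only $x\circ x=2x^2$ does, so the Jordan substitutions must be arranged with the resulting factors of $2$ and the $2$-torsion-freeness hypothesis.

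For comparison, the paper offers no proof of this statement at all: it is quoted as Proposition 3.1 of Bre\v{s}ar \cite{bresar1993} and used as a black box. So there is no in-paper argument for your text to diverge from; what you have written correctly identifies the standard strategy (polarize, exploit the Jordan closure of $U$, conclude by semiprimeness), but the lemma's actual content lies in the computation you left unperformed, so the proposal does not yet establish the result.
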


\begin{lemma}\cite[Theorem 3.2]{bresar1993}\label{L-4}
Let $R$ be a prime ring. If an additive mapping $F:R\to R$ is commuting on $R,$ then there exists $\lambda\in C$ and an additive $\xi:R\to C,$ such that $F(x)=\lambda x+\xi(x)$ for all $x\in R.$
\end{lemma}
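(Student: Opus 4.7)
The plan is to prove Bre\v{s}ar's theorem in two stages: first reduce the commuting identity $[F(x),x]=0$ to a bilinear functional identity, and then extract the standard form $F(x)=\lambda x + \xi(x)$ using the generalized polynomial identity (GPI) machinery available in prime rings.

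The first step is to linearize. Replacing $x$ by $x+y$ in $[F(x),x]=0$ and using additivity of $F$ together with $[F(x),x]=0=[F(y),y]$ yields
\begin{equation*}
[F(x),y]+[F(y),x]=0, \quad \text{equivalently} \quad [F(x),y]=[x,F(y)]
\end{equation*}
for all $x,y\in R$; call this identity $(\ast)$.

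The second step is to substitute products into $(\ast)$. Replacing $y$ by $xy$ and expanding via $[F(x),xy]=[F(x),x]y+x[F(x),y]=x[F(x),y]$, I obtain $x[F(x),y]=[x,F(xy)]$; applying $(\ast)$ a second time gives $[x,xF(y)]=[x,F(xy)]$, so that
\begin{equation*}
[x,F(xy)-xF(y)]=0.
\end{equation*}
Symmetrically, $[x,F(yx)-F(y)x]=0$. The map $B(x,y):=F(xy)-xF(y)$ is then biadditive and satisfies $[x,B(x,y)]=0$ for all $x,y\in R$.

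The third step is to exploit this to run a GPI argument. Linearizing $[x,B(x,y)]=0$ in $x$ produces the symmetric bilinear identity
\begin{equation*}
[x_1,B(x_2,y)]+[x_2,B(x_1,y)]=0.
\end{equation*}
For each fixed $y$, this is a GPI in $x_1,x_2$ whose coefficients live in $Q_{mr}(R)$ (we use Lemma~\ref{L-2} to clear denominators); by the Martindale GPI theorem, a nontrivial identity of this form on a prime ring forces a linear dependence among the coefficients over the extended centroid $C$. Tracking these coefficients produces an element $\lambda(y)\in C$ with $B(x,y)-\lambda(y)\,x$ central. A second comparison, varying $y$ and using biadditivity of $B$, shows $\lambda(y)$ is independent of $y$, hence a fixed $\lambda\in C$. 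Setting $\xi(x):=F(x)-\lambda x$, the identity $(\ast)$ becomes $[\xi(x),y]=0$ for every $y\in R$, so $\xi(x)\in Z(R)\subseteq C$ and $F(x)=\lambda x+\xi(x)$, as claimed.

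The main obstacle is precisely the third step: going from the bilinear functional identity to the explicit scalar $\lambda\in C$. This is the substantive content of Bre\v{s}ar's theorem and requires careful bookkeeping in the Martindale--Bre\v{s}ar GPI theory, in particular the fact that a nontrivial identity $\sum_i a_i X b_i=0$ on a prime ring forces linear dependence of the $a_i$ (or the $b_i$) over $C$. The delicate points are checking that the extracted scalar $\lambda$ is genuinely independent of the auxiliary element $y$ and that the residue $\xi(x)=F(x)-\lambda x$ inherits additivity and lands in $C$; everything else is formal manipulation from the two displayed identities above.
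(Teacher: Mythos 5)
Your first two steps are sound: linearizing $[F(x),x]=0$ gives $(\ast)$, and the substitution $y\mapsto xy$ does yield $[x,\,F(xy)-xF(y)]=0$. The argument breaks down at the third step. The linearized relation $[x_1,B(x_2,y)]+[x_2,B(x_1,y)]=0$ is \emph{not} a generalized polynomial identity: its ``coefficients'' $B(x_i,y)$ are values of an unknown map evaluated at the variables, whereas Martindale's GPI theorem concerns identities $\sum_i a_iXb_i=0$ with fixed $a_i,b_i\in Q_{mr}(R)$. What you are really appealing to is functional identity theory, and in this instance it is circular: for each fixed $y$ the map $x\mapsto B(x,y)$ is itself an additive commuting map, so asserting that it must have the form $\lambda(y)x+(\text{central})$ is exactly the statement being proved. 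Worse, the bookkeeping you defer is false as stated: if the theorem holds, then $B(x,y)=F(xy)-xF(y)=-\xi(y)x+\xi(xy)$, so the scalar attached to $x$ in $B(x,y)$ is $-\xi(y)$, which genuinely depends on $y$ and is not the $\lambda$ of the conclusion; consequently the final passage from a statement about $B$ to ``$(\ast)$ becomes $[\xi(x),y]=0$'' is a non sequitur.

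The paper does not prove this lemma; it quotes it from Bre\v{s}ar, and his route avoids your obstacle entirely. Work with $D(x,y):=[F(x),y]$ rather than $F(xy)-xF(y)$. By $(\ast)$, $D(x,y)=[x,F(y)]$, so $D$ is a biderivation: an inner derivation in each argument. Expanding $D(xu,yv)$ in two ways gives the identity $D(x,y)\,w\,[u,v]=[x,y]\,w\,D(u,v)$ for all $w\in R$, whose coefficients now are honest fixed elements; the standard prime-ring lemma (if $awb=cwd$ for all $w$, with $b,d$ not both zero, then $a$ and $c$ are linearly dependent over $C$) then yields a single $\lambda\in C$ with $D(x,y)=\lambda[x,y]$ for all $x,y$ when $R$ is noncommutative. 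Hence $[F(x)-\lambda x,\,y]=0$ for all $y$, so $\xi(x):=F(x)-\lambda x$ lies in $Z(R)\subseteq C$ and is additive; the commutative case is trivial with $\lambda=0$, $\xi=F$. If you want to keep a proof in the spirit of your outline, this biderivation computation is the missing ingredient that replaces the invalid GPI step.
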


\begin{lemma}\cite[Theorem 6.5.1]{Herstein76}\label{L-5}
If $R$ is a semiprime ring, then $\overline{H(R)}$ is semiprime, where $\overline{H(R)}$ is the ring generated by all symmetric elements in $R$.
\end{lemma}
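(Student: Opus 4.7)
The plan is to argue by contradiction. Write $A=\overline{H(R)}$, and suppose $A$ is not semiprime, so there exists $0\neq a\in A$ with $aAa=0$. My goal is to derive $a=0$ from the semiprimeness of the ambient ring $R$, producing the desired contradiction.

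I would begin with two structural observations. First, since $H(R)$ is stable under $\ast$ and generates $A$ as a subring, the ring $A$ is itself $\ast$-invariant; in particular $a^{\ast}\in A$, and every word in $a$ and $a^{\ast}$ lies in $A$. Second, for every $r\in R$ both the trace $r+r^{\ast}$ and the norm $rr^{\ast}$ lie in $H(R)\subseteq A$, and any ring-theoretic combination of such traces and norms also lies in $A$. This supplies a rich family of canonical test elements to substitute into $aAa=0$: plugging in traces gives $a(r+r^{\ast})a=0$, hence $ara=-ar^{\ast}a$ for all $r\in R$; plugging in norms gives $arr^{\ast}a=0$; mixed symmetric expressions such as $rs+s^{\ast}r^{\ast}$ produce further cascading identities. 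Since $a^{2}\in A$ one already has $a\cdot a^{2}\cdot a=a^{4}=0$, so $a$ is nilpotent, and the trace identity allows one to trade a right multiplication by $r$ for a left multiplication by $r^{\ast}$ inside the sandwich $a(\cdot)a$.

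The main obstacle is upgrading the relation $aAa=0$ to a vanishing of the form $bRb=0$ for some $0\neq b$ built from $a$, which is what is needed before the semiprimeness of $R$ can be invoked. Since $A$ is typically a proper subring of $R$, the symmetric-element identities above do not by themselves reach arbitrary $r\in R$. To close this gap I would follow the Lie-theoretic strategy used by Herstein in \cite[Ch.~6]{Herstein76}: the Lie ideal $[H(R),R]$ interacts in a controlled way with $A$, and by iterating commutators of arbitrary $r\in R$ against symmetric elements one enlarges the set of $h$ for which $aha=0$ until it contains a generating set for the two-sided $\ast$-invariant ideal of $R$ generated by $a$. Semiprimeness of $R$ then forces $a=0$. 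The delicate part, and the place where one must be most careful, is in the commutator expansions and in handling the $2$-torsion obstructions created by $\ast$ being an anti-automorphism of order $2$; this is precisely the technical content of Herstein's Lie-theoretic framework, and I would import it essentially wholesale rather than rebuild it from scratch.
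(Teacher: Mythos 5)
This lemma is not proved in the paper at all: it is quoted verbatim from Herstein's book (\cite[Theorem 6.5.1]{Herstein76}), so there is no in-paper argument to compare you against. Judged as a standalone proof, your proposal has a genuine gap, and it is exactly the one you flag yourself as ``the main obstacle.'' Your preliminary observations are correct and cost-free: $A=\overline{H(R)}$ is $\ast$-invariant, traces $r+r^{\ast}$ and norms $rr^{\ast}$ lie in $H(R)\subseteq A$, so $aAa=0$ yields $a(r+r^{\ast})a=0$, $arr^{\ast}a=0$, $ara=-ar^{\ast}a$, and $a^{4}=0$. The final step you invoke is also fine in principle: if one could show $a\,I\,a=0$ for $I$ the ideal of $R$ generated by $a$, then $aRaRa=0$, hence $(aRa)R(aRa)=0$, and semiprimeness of $R$ gives $aRa=0$ and then $a=0$. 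But the bridge between these two ends --- upgrading $aha=0$ for $h$ ranging over traces, norms and words in $H(R)$ to a relation involving all of $R$ (or at least the ideal generated by $a$) --- is never carried out. You state that you would ``import Herstein's Lie-theoretic framework essentially wholesale,'' which is not a proof but precisely the citation the paper itself makes; in effect the proposal reduces to ``this is Theorem 6.5.1 of Herstein,'' with the genuinely hard commutator/2-torsion analysis left untouched. The sketch of how the Lie ideal $[H(R),R]$ would enlarge the annihilated set is a plausible heuristic, not an argument: no concrete identity is derived showing that $aha=0$ propagates to $a\,x a x^{\ast}\,a$-type elements or to $RaR$, and it is exactly at this point that the semiprimeness of $R$ (and the torsion subtleties you mention) must actually be used.

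So the verdict is: correct framing and correct easy reductions, but the central step is missing, and filling it would amount to reproducing Herstein's proof rather than supplementing it. For the purposes of this paper that is no defect --- the result is legitimately used as a black box --- but your text should then be presented as a citation with context, not as a proof.
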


\begin{lemma}\cite[Theorem 6.5.3]{Herstein76}\label{L-6}
If $R$ is a semiprime ring, then $Z(\overline{H(R)})\subseteq Z(R).$
\end{lemma}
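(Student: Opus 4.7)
The plan is to fix $z \in Z(\overline{H(R)})$ and show that the inner derivation $d := [z,\,\cdot\,] \colon R \to R$ is identically zero. By hypothesis $d$ vanishes on $H(R)$, and since $d$ is a derivation this extends to $d \equiv 0$ on the subring $\overline{H(R)}$. Note that $H(R)$ is $\ast$-stable, hence so is $\overline{H(R)}$, and consequently $z^{\ast} \in Z(\overline{H(R)})$ as well.

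The first elementary observation is that $r + r^{\ast} \in H(R)$ for every $r \in R$, giving $d(r^{\ast}) = -d(r)$. Next, since both $rr^{\ast}$ and $r^{\ast}r$ lie in $H(R)$, applying $d$ together with the previous identity yields, writing $a := d(r)$, the two relations
\[
a\,r^{\ast} = r\,a \qquad \text{and} \qquad a\,r = r^{\ast}\,a.
\]
From here I would substitute $r + r'$ into these identities (for arbitrary $r' \in R$) and subtract the corresponding relations for $r$ and $r'$ separately, producing mixed identities that link $d(r)$, $d(r')$, and the elements $r, r', r^{\ast}, r'^{\ast}$. Specializing $r' = h \in H(R)$ collapses the mixed identity to $ah = ha$, so $a$ centralizes $H(R)$ and hence the whole of $\overline{H(R)}$. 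Further specializations (e.g.\ $r' = r^{2}$, or $r'$ skew) and feeding the resulting polynomial identities back through one another are intended to yield progressively stronger commutation relations between $a$ and arbitrary elements of $R$.

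The hoped-for endpoint is an identity of the form $a\,R\,a = 0$, at which point semiprimeness of $R$ forces $a = 0$, i.e.\ $d(r) = 0$ for every $r \in R$, and therefore $z \in Z(R)$. The technical crux, and the step I expect to be the principal obstacle, is precisely this last extraction: the basic relations $a r = r^{\ast} a$ and $a r^{\ast} = r a$ tie $a$ only to the specific $r$ from which it was constructed, and it takes the full force of Herstein's identities on products of symmetric and skew elements---together with the fact that $\overline{H(R)}$ is itself semiprime by Lemma \ref{L-5}---to bootstrap these into a global annihilator condition. This is the work carried out in \cite[Ch.~6]{Herstein76}.
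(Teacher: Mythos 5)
Your opening moves are correct and are the natural ones: for $z\in Z(\overline{H(R)})$ the inner derivation $d=[z,\cdot\,]$ kills $\overline{H(R)}$, hence $d(r^{\ast})=-d(r)$ for all $r\in R$, and applying $d$ to $rr^{\ast},\,r^{\ast}r\in H(R)$ gives $ar^{\ast}=ra$ and $ar=r^{\ast}a$ with $a:=d(r)$; linearizing and specializing the second variable to $H(R)$ does yield $[a,H(R)]=0$. But the argument stops exactly where the theorem begins. The passage from these relations to $a=0$ --- which you describe as the ``hoped-for endpoint'' $aRa=0$ followed by semiprimeness --- is never carried out: ``further specializations \dots are intended to yield progressively stronger commutation relations'' is a plan, not a derivation, and you explicitly delegate the ``technical crux'' to Herstein's Chapter 6. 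Since everything preceding that point is routine bookkeeping, what you have is a correct setup plus a restatement of the difficulty, not a proof. (For instance, one genuinely easy further consequence of your identities is $[r,r^{\ast}]\,a=0$, obtained by comparing $a(rr^{\ast})=(rr^{\ast})a$ with $a\,r\,r^{\ast}=r^{\ast}a\,r^{\ast}=r^{\ast}r\,a$; but nothing in the proposal bridges from such statements, each tied to the particular $r$, to a global annihilation condition, and building that bridge --- using the semiprimeness of $\overline{H(R)}$ from Lemma \ref{L-5} and a careful analysis of how $H(R)$ and $S(R)$ generate and sit inside $R$ --- is the entire content of the lemma.)

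Note also that in the paper this statement is a quoted result, \cite[Theorem 6.5.3]{Herstein76}, and no proof is given there, so there is no in-paper argument to compare your route against; judged on its own, your attempt has the genuine gap that its decisive step is cited rather than proved.
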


\begin{lemma}\cite[Lemma 2]{lanski1976}\label{L-7}
If $R$ is a semiprime ring and $[S(R)^{2},S(R)^{2}]=(0),$ then $R$ satisfies $s_{4}.$
\end{lemma}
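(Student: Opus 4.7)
The plan is to convert the hypothesis $[S(R)^{2},S(R)^{2}]=(0)$ into a $\ast$-polynomial identity on all of $R$ and then feed it into Lemma~\ref{F-1}. The crucial observation is that for any $x\in R$ the element $x-x^{\ast}$ lies in $S(R)$, so the hypothesis immediately yields
\[
\bigl[(x-x^{\ast})^{2},\,(y-y^{\ast})^{2}\bigr]=0 \quad \text{for all } x,y\in R.
\]
After expansion this is a generalised $\ast$-polynomial identity of degree $4$ in two variables, and it descends to every $\ast$-homomorphic image of $R$.

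I would then reduce the problem to the prime case via a subdirect decomposition through the minimal primes $\{P_{\alpha}\}$ of $R$. When $P_{\alpha}=P_{\alpha}^{\ast}$, the involution descends and the identity is inherited by the prime $\ast$-ring $R/P_{\alpha}$. When $P_{\alpha}\neq P_{\alpha}^{\ast}$, I would pass instead to the $\ast$-invariant semiprime ideal $P_{\alpha}\cap P_{\alpha}^{\ast}$; its quotient is isomorphic to $A\times A$ equipped with the exchange involution, where $A\cong R/P_{\alpha}$ is prime. In this quotient the displayed identity specialises to $[c^{2},d^{2}]=0$ for all $c,d\in A$, and linearising gives a nontrivial polynomial identity on $A$ of degree $4$. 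In either case each prime factor satisfies a nontrivial PI of degree $4$, and hence by Kaplansky--Amitsur (or by the Herstein--Lanski structure theory for the subring generated by $S(R)$) is algebraic of bounded degree at most $2$ over its extended centroid $C$.

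By Lemma~\ref{F-1}(iii) each prime factor satisfies $s_{4}$. Since the class of rings satisfying $s_{4}$ is closed under subdirect products, $R$ itself satisfies $s_{4}$, which concludes the proof.

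The main obstacle is the precise PI-theoretic step: going from the generalised $\ast$-polynomial identity of degree $4$ to the conclusion that each prime factor is algebraic of bounded degree $2$ over $C$. This is where the nontriviality of the identity on a prime ring must be established and where care with the characteristic (particularly $2$, in which $H(R)\cap S(R)\neq 0$ and the decomposition $R=H(R)\oplus S(R)$ fails) and with the kind of the involution (first or second) comes into play; the subdirect reduction and the specialisation to the non-$\ast$-invariant case are mostly routine bookkeeping.
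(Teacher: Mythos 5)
The paper gives no argument for this statement at all: it is quoted verbatim as Lemma 2 of Lanski \cite{lanski1976}, so your proposal must stand as a self-contained proof, and as such it has a genuine gap at exactly the point you yourself flag as ``the main obstacle''. For a $\ast$-invariant minimal prime $P$, what descends to $R/P$ is a $\ast$-polynomial identity of degree $4$ evaluated on (images of) skew elements; Kaplansky--Amitsur does not turn this into algebraicity of degree $2$ over $C$. Amitsur's theorem on $\ast$-identities only yields an ordinary polynomial identity of roughly twice the degree, which bounds the dimension of the central closure by $16$, not by $4$; excluding the intermediate possibilities compatible with a degree-$8$ identity is precisely the content of Lanski's lemma and requires the Herstein--Lanski analysis of the subring generated by $S(R)^{2}$, which your sketch never carries out, so Lemma \ref{F-1}(iii) cannot yet be invoked. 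Note also that the statement carries no $2$-torsion-freeness hypothesis, and when $P=P^{\ast}$ a skew element $\bar{k}$ of $R/P$ need not be the image of a skew element of $R$ (only $2\bar{k}$ is, being the image of $k-k^{\ast}$), so characteristic-$2$ factors need a genuinely separate argument that you acknowledge but do not supply.

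The non-$\ast$-invariant case is also incorrect as written: $R/(P\cap P^{\ast})$ is only a subdirect product of $R/P$ and $R/P^{\ast}$, not $A\times A$ with the exchange involution (that would require $P+P^{\ast}=R$), so you cannot specialise the hypothesis to $[c^{2},d^{2}]=0$ for all $c,d\in A$. This half can be repaired along classical lines: choose $p\in P^{\ast}\setminus P$; then for all $a,b\in R$ the element $apb-(apb)^{\ast}$ lies in $S(R)$ and is congruent to $apb$ modulo $P$, so the image of $S(R)$ in $R/P$ contains $\bar{R}\bar{p}\bar{R}$, and linearising $[s^{2},t^{2}]=0$ within the additive group $S(R)$ gives the multilinear identity $[x_{1}x_{2}+x_{2}x_{1},\,y_{1}y_{2}+y_{2}y_{1}]=0$ on a nonzero ideal of the prime ring $R/P$; since a prime ring with a nonzero PI ideal is PI, Posner--Kaplansky gives dimension at most $4$ over $C$ and hence $s_{4}$ there, in accordance with Lemma \ref{F-1}. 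But this repair settles only the easy half; the $\ast$-invariant prime case, which is the heart of the lemma, remains unproved in your proposal.
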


\begin{lemma}\cite[Theorem 3]{lee1986}\label{L-8}
Let $R$ be a prime ring with involution $'\ast'$ and center $Z(R).$ If $n$ be a fixed natural number such that $x^{n}\in Z(R)$ for all $x\in H(R),$ then $R$ satisfies $s_{4}.$
\end{lemma}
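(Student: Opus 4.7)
The plan is to prove the lemma in three stages: linearize the hypothesis into a multilinear central identity on $H(R)$; propagate this to an $s_{4}$-type polynomial identity on $\overline{H(R)}$; and lift the identity from $\overline{H(R)}$ to $R$ by splitting into cases on the kind of the involution.

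For the linearization, I would fix $h_{1},\ldots,h_{n}\in H(R)$ and consider $h=k_{1}h_{1}+\cdots+k_{n}h_{n}\in H(R)$ for integer scalars $k_{i}$. By hypothesis $h^{n}\in Z(R)$, so $[h^{n},r]=0$ for every $r\in R$. Expanding $h^{n}$ as a polynomial in the $k_{i}$ and applying a Vandermonde argument over sufficiently many integer tuples --- passing to $Q_{mr}(R)$ via Lemma \ref{L-2} whenever inversion of the integer multinomial coefficients is required --- should extract the multilinear central identity
\[
\sum_{\sigma\in S_{n}} h_{\sigma(1)} h_{\sigma(2)} \cdots h_{\sigma(n)} \in Z(R) \qquad \text{for all } h_{1},\ldots,h_{n}\in H(R).
\]

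Next, I would use that $\overline{H(R)}$ is semiprime by Lemma \ref{L-5} and $Z(\overline{H(R)})\subseteq Z(R)$ by Lemma \ref{L-6}. The centrality above says that a fixed symmetric degree-$n$ expression in generators of $\overline{H(R)}$ always lies in $Z(\overline{H(R)})$. A standard semiprime PI argument --- repeatedly commuting the central expression with further generators, iterating, and absorbing nilpotent error terms via semiprimeness --- should upgrade this to an honest polynomial identity on $\overline{H(R)}$, whose degree is pinned down by Posner/Martindale-type structure theory so that $\overline{H(R)}$ satisfies $s_{4}$.

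The main obstacle is the last stage, namely transferring $s_{4}$ from $\overline{H(R)}$ to $R$. I would split by the kind of the involution. If the involution is of the second kind, there exists $0\neq c\in Z(R)\cap S(R)$; then for every $x\in R$ both $x+x^{\ast}$ and $c(x-x^{\ast})$ lie in $H(R)$, so $R$ is a central extension of $\overline{H(R)}$ and the $s_{4}$-identity passes to $R$. If the involution is of the first kind, then for every $s\in S(R)$ one has $s^{2}\in H(R)$ and hence $s^{2n}\in Z(R)$; combining this with the multilinear identity from the first stage should force $[s_{1}^{2},s_{2}^{2}]=0$ for all $s_{1},s_{2}\in S(R)$, i.e., $[S(R)^{2},S(R)^{2}]=0$, after which Lemma \ref{L-7} closes this case. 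Either way, by Lemma \ref{F-1}, $R$ is commutative or embeds into $M_{2}(F)$ for some field $F$, which is the desired conclusion.
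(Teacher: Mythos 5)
The paper does not prove this lemma: it is quoted verbatim from Lee and Lee \cite{lee1986} (their Theorem 3), so there is no internal proof to compare against, and your attempt has to stand on its own. It does not. The decisive gap is your second stage. From the multilinearized central identity $\sum_{\sigma\in S_{n}}h_{\sigma(1)}\cdots h_{\sigma(n)}\in Z(R)$ on $H(R)$, the standard Amitsur--Posner--Martindale machinery only yields that $R$ satisfies \emph{some} polynomial identity of degree controlled by $n$, hence is an order in a central simple algebra of dimension at most roughly $\lfloor (n+1)/2\rfloor^{2}$ over its center. Nothing in that machinery pins the dimension down to $4$; for large $n$ the generic conclusion is $s_{2m}$ with $m$ growing with $n$, not $s_{4}$. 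The entire content of the theorem is that the \emph{specific} identity $x^{n}\in Z(R)$ forces dimension at most $4$, and the phrase ``whose degree is pinned down by Posner/Martindale-type structure theory so that $\overline{H(R)}$ satisfies $s_{4}$'' simply asserts the conclusion. The actual route (in Lee--Lee and in the earlier Herstein--Chacron-style arguments it builds on) is to use the PI conclusion only to reduce to a simple algebra finite-dimensional over its center, and there to argue directly: if the dimension exceeds $4$, one produces a symmetric element --- typically manufactured from a nontrivial symmetric idempotent $e$, for which $e^{n}=e\notin Z(R)$ --- violating the hypothesis. None of that appears in your sketch.

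Two further points. In the first-kind case your claim that $s^{2n}\in Z(R)$ for all $s\in S(R)$, together with the multilinear identity, ``should force'' $[S(R)^{2},S(R)^{2}]=(0)$ is unsupported; knowing that the $n$-th powers of the symmetric elements $s_{1}^{2},s_{2}^{2}$ are central gives no reason for $s_{1}^{2}$ and $s_{2}^{2}$ to commute, and this implication is essentially as hard as the theorem itself, so Lemma \ref{L-7} is being invoked before you have earned its hypothesis. Finally, your linearization via a Vandermonde argument ``passing to $Q_{mr}(R)$'' is flawed: Lemma \ref{L-2} says nothing about inverting integers, and if $\mathrm{char}\,R=p\le n$ the relevant multinomial coefficients vanish in $R$ and in every ring of quotients. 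The fix is routine --- use the complete linearization $\sum_{S\subseteq\{1,\dots,n\}}(-1)^{n-|S|}\bigl(\sum_{i\in S}h_{i}\bigr)^{n}$, which expresses the multilinear part as an integer combination of values of $x\mapsto x^{n}$ on $H(R)$ with no division --- but as written the step does not go through.
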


\begin{lemma}\cite[Theorem 7]{lee1986}\label{L-9}
Let $R$ be a prime ring with involution $'\ast'$ and center $Z(R).$ If $d$ is a nonzero derivation on $R$ such that $d(x)x+xd(x)\in Z(R)$ for all $x\in S(R),$ then $R$ satisfies $s_{4}.$
\end{lemma}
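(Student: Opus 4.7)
My plan is to linearise the hypothesis, extract a commuting identity on the skew-symmetric elements $S(R)$, apply the structure theorem for commuting additive maps, and finally invoke the polynomial identity criterion of Lemma \ref{L-7} to conclude that $R$ satisfies $s_{4}$.

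First I would polarise: substituting $x\mapsto x+y$ in $d(x^{2})=d(x)x+xd(x)\in Z(R)$, with $x,y\in S(R)$, yields $d(x\circ y)\in Z(R)$. Since $d(x^{2})\in Z(R)$ commutes with $x$, expanding gives the identity $[d(x),x]\circ x=0$ for every $x\in S(R)$, and the polarised version produces a symmetric relation linking $[d(x),y]$ and $[d(y),x]$ through anti-commutators with $x$ and $y$. Combined with primeness of $R$ and the $\ast$-structure, the goal is to upgrade this family of identities to $[d(x),x]\in Z(R)$ for every $x\in S(R)$. Once this centralising property is in hand, Lemma \ref{Fact-1}, applied with the Jordan subring $U=S(R)$, promotes it to the commuting identity $[d(x),x]=0$ on $S(R)$.

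Next, I would invoke Lemma \ref{L-4} (in the form obtained by extending additive maps to $Q_{mr}(R)$ via Lemma \ref{L-2}) to obtain $\lambda\in C$ and an additive $\xi:S(R)\to C$ with $d(x)=\lambda x+\xi(x)$ for all $x\in S(R)$. Substituting this representation back into $d(x^{2})\in Z(R)$ and expanding on products of skew elements, I expect to deduce
\[
[S(R)^{2},S(R)^{2}]=(0),
\]
at which point Lemma \ref{L-7} immediately forces $R$ to satisfy $s_{4}$. Along the way one uses that $d\neq 0$ together with the fact that $S(R)$ generates a substantial Jordan subring of $R$; otherwise the case $\lambda=0$ and $\xi\equiv 0$ on $S(R)$ must be ruled out by a separate argument exploiting primeness.

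The main obstacle is the middle step: bootstrapping the weak identity $[d(x),x]\circ x=0$ together with its polarisation into the clean centralising condition $[d(x),x]\in Z(R)$ on $S(R)$, without assuming $d$ commutes with the involution. One must leverage the prime hypothesis carefully and keep an eye on possible $2$-torsion pathologies (which, in characteristic $2$, collapse $S(R)$ and $H(R)$ and require a separate treatment via Lemma \ref{L-8}). Once the commuting identity on $S(R)$ is secured, the structure theorem of Lemma \ref{L-4} and the polynomial identity criterion of Lemma \ref{L-7} conclude the proof in a routine manner.
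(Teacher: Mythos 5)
This lemma is not proved in the paper at all: it is quoted verbatim from Lee and Lee \cite{lee1986}, so your sketch has to stand on its own, and it has a genuine gap at exactly the point you yourself flag as ``the main obstacle.'' From $d(x^{2})\in Z(R)$ on $S(R)$ you correctly obtain $[d(x),x]\circ x=0$ and its polarised companion, but no argument is offered -- and none is routine -- that upgrades this anticommutator identity to the centralising condition $[d(x),x]\in Z(R)$ for $x\in S(R)$. That bootstrapping step is essentially the whole content of the theorem: for a prime ring not satisfying $s_{4}$ the conclusion is that no nonzero derivation with the stated property exists, so the intermediate claim cannot be reached by ``leveraging primeness carefully'' without doing the real work. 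As written, the proof is deferred precisely where it is hardest.

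Even granting that step, the two lemmas you then invoke are used outside their hypotheses. Lemma \ref{Fact-1} requires $U$ to be a Jordan subring of $R$, but $S(R)$ is not one: for $x,y\in S(R)$ the elements $x^{2}$ and $x\circ y$ lie in $H(R)$, not in $S(R)$ ($S(R)$ is a Lie subring, not a Jordan subring), so the passage from $[d(x),x]\in Z(R)$ to $[d(x),x]=0$ on $S(R)$ is not covered. Lemma \ref{L-4} describes additive maps commuting on all of $R$; the description of maps commuting only on the skew elements is a different and harder theorem, and its known forms assume precisely that $R$ does not satisfy $s_{4}$ (or a dimension bound over $C$), so using it here is circular unless you recast the whole argument as a proof by contradiction and cite that stronger result explicitly. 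Finally, the claimed deduction of $[S(R)^{2},S(R)^{2}]=(0)$ from $d(x)=\lambda x+\xi(x)$ on $S(R)$ is not spelled out: the hypothesis forces you to evaluate $d$ on elements of $S(R)^{2}\subseteq \overline{H(R)}$, where that representation is not available. The overall skeleton (polarise, structure theorem for commuting maps, then Lemma \ref{L-7} or Lemma \ref{F-1}) is a plausible strategy, but the decisive steps are missing, and the paper itself relies on the full argument of Lee--Lee rather than anything recoverable from this outline.
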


\begin{lemma}\cite[Lemma 1.3]{Zalar1991}\label{L-10}
Let $R$ be a semiprime ring and $a\in R$ some fixed element. If $a[x,y]=0$ for all $x,y\in R,$ then there exists an ideal $I$ of $R$ such that $a\in I\subseteq Z(R)$ holds.
\end{lemma}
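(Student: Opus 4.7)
The plan is to first upgrade the hypothesis into the statement $a\in Z(R)$, and then to exhibit an explicit ideal sitting inside $Z(R)$ that contains $a$.

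To pull centrality out of the hypothesis I would massage $a[x,y]=0$ by substituting a product. Using the Leibniz-type identity $[xy,z]=x[y,z]+[x,z]y$, the relation $a[xy,z]=0$ expands to $ax[y,z]+a[x,z]y=0$, and since the second summand vanishes by assumption, this yields $ax[y,z]=0$ for all $x,y,z\in R$, i.e.\ $aR[R,R]=0$. Expanding next $[a,x]r[y,z]=axr[y,z]-xar[y,z]$, the first term is zero by what has just been shown (applied with $xr$ in place of $x$), and the second is zero because $ar[y,z]=0$. Therefore $[a,x]R[R,R]=0$ for every $x\in R$. Specialising $[y,z]$ to $[a,x]$ produces $[a,x]R[a,x]=0$, and semiprimeness of $R$ forces $[a,x]=0$. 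Hence $a\in Z(R)$.

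With $a$ central, the natural candidate is $I:=aR+\mathbb{Z}a$, where the $\mathbb{Z}a$ summand ensures $a\in I$ even if $R$ has no unity. Because $a\in Z(R)$ one has $aR=Ra$, and this common set is stable under left and right multiplication by $R$, so $I$ is a two-sided ideal of $R$. To verify $I\subseteq Z(R)$, note that for all $r,s\in R$ the centrality of $a$ together with the hypothesis gives $[ar,s]=a[r,s]+[a,s]r=a[r,s]=0$; thus $aR\subseteq Z(R)$, while $\mathbb{Z}a\subseteq Z(R)$ is automatic. So $I$ is the desired ideal.

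I expect the main obstacle to be the first half of the argument: spotting the sequence of substitutions which converts the one-sided identity $a[x,y]=0$ into an expression of the form $bRb=0$ with $b=[a,x]$, since this is precisely what licenses the use of semiprimeness. The subsequent ideal construction is essentially bookkeeping once centrality of $a$ is in hand.
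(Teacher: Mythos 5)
Your argument is correct: the substitutions $x\mapsto xy$ and then the expansion of $[a,x]r[y,z]$ legitimately reduce the hypothesis to $[a,x]R[a,x]=0$, so semiprimeness gives $a\in Z(R)$, and the ideal $I=aR+\mathbb{Z}a$ is then easily checked to be two-sided, to contain $a$, and to lie in $Z(R)$ since $[ar,s]=a[r,s]=0$. Note that the paper itself offers no proof of this statement -- it is quoted from Zalar's Lemma 1.3 -- and your argument is essentially the standard one given there (first force $a$ central via semiprimeness, then show the ideal generated by $a$ is central), so there is nothing to flag.
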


\section{Results on centrally extended Jordan derivations}
 This section deals with the study of centrally extended Jordan derivations of rings. In fact, we characterize $2$-torsion free noncommutative prime rings admitting $CE-$Jordan derivations.  Recently, Bell and Daif \cite{bell18} introduced \emph{centrally extended derivations} which are obviously a generalization of derivations; and they discussed the existence of these mappings in rings. Accordingly, a self-mapping $d$ of $R$ is called a centrally extended derivation if $d(x+y)-d(x)-d(y)\in Z(R)$ and $d(xy)-d(x)y-xd(y)\in Z(R)$ for all $x,y \in R.$ Motivated by this, we now introduce a finer notion than centrally extended derivation, and call a \emph{centrally extended Jordan derivation}, as follows:
\begin{definition}
A mapping $\delta:R\to R$ that satisfies
\begin{equation}\tag{A}
\delta(x+y)-\delta(x)-\delta(y) \in Z(R),
\end{equation}
\begin{equation}\tag{B}
\delta(x\circ y)-\delta(x)\circ y-x\circ \delta(y) \in Z(R)
\end{equation}
 for all $x,y \in R,$ is called a \emph{centrally extended Jordan derivation} of $R$. We shall abbreviate this map as $CE-$Jordan derivation.
\end{definition}

\begin{example}\label{example-1}
Let $R=M_{2}(\mathbb{Z})\times\mathbb{Z}$ be a ring and define a mapping $\delta:R\to R$ by
$$\delta\left(\left(\begin{array}{cc}
a & b \\
c & d
\end{array} \right),x\right)=\left(\left(\begin{array}{cc}
0 & -b \\
c & 0
\end{array} \right),1\right).$$ Then, it is straightforward to check that $\delta$ is a $CE-$Jordan derivation of $R.$
\end{example}

\begin{remark}\label{rem-1}
If $R$ is a 2-torsion free noncommutative prime ring, then it is not difficult to see that an additive map $\delta:R\to R$ is a $CE-$Jordan derivation if and only if $\delta(x^{2})-\delta(x)x-x\delta(x)\in Z(R)$ for all $x\in R.$ It is natural to ask whether a $CE-$Jordan derivation can be a $CE-$derivation or a Jordan derivation? In the following counter-example, we show that in case $R$ is a noncommutative prime ring, the above statement is not always true: \\
Let $\mathbb{Z}$ be the ring of integers and $$R=\left\{\left(\begin{array}{cc}
a & b \\
c & d
\end{array} \right) |~a,b,c,d \in \mathbb{Z}\right\}$$ be a noncommutative prime ring.
Then a mapping $\delta:R\to R$ such that $$\delta\left(\begin{array}{cc}
a & b \\
c & d
\end{array} \right)=\left( \begin{array}{cc}
0 & b \\
b & 0
\end{array} \right)$$ is a $CE-$Jordan derivation, but neither a $CE-$derivation nor a Jordan derivation.
\end{remark}

 We begin with the following lemma:

\begin{lemma}\label{lemma-1}
Let $R$ be a 2-torsion free ring with no nonzero central ideal. If $\delta$ is a $CE-$Jordan derivation of $R,$ then $\delta$ is additive.
\end{lemma}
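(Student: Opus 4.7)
The aim is to show that for all $x,y \in R$, the element $c := \delta(x+y) - \delta(x) - \delta(y)$ is zero. Condition (A) already places $c \in Z(R)$, so the task is to promote centrality to vanishing. Since the hypothesis is about absence of central \emph{ideals}, the natural plan is to produce, out of $c$, a two-sided ideal contained in $Z(R)$, and then invoke $2$-torsion freeness to remove a factor of $2$.

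The key computation is to evaluate $\delta\bigl((x+y)\circ z\bigr)$ in two different ways for arbitrary $z \in R$, exploiting the bilinearity of the anti-commutator. Applying (B) directly with $x+y$ in the first slot gives
\[
\delta\bigl((x+y)\circ z\bigr) \;\equiv\; \delta(x+y)\circ z + (x+y)\circ\delta(z) \pmod{Z(R)}.
\]
On the other hand, rewriting $(x+y)\circ z = x\circ z + y\circ z$, invoking (A) to split the outer $\delta$, and applying (B) to each summand yields
\[
\delta\bigl((x+y)\circ z\bigr) \;\equiv\; \bigl(\delta(x)+\delta(y)\bigr)\circ z + (x+y)\circ\delta(z) \pmod{Z(R)}.
\]
Subtracting the two congruences eliminates the $(x+y)\circ\delta(z)$ term and forces $c\circ z \in Z(R)$ for every $z \in R$.

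Since $c \in Z(R)$ one has $c\circ z = 2cz$, hence $2cR \subseteq Z(R)$. Closure of $2cR$ under addition is clear, and the centrality of $c$ gives $r(2cz) = 2c(rz)$ and $(2cz)r = 2c(zr)$, so $2cR$ is a two-sided ideal of $R$; adjoining the integer multiples $\mathbb{Z}\cdot 2c$ (needed if $R$ is not unital) keeps everything inside $Z(R)$, so the principal ideal generated by $2c$ is a central ideal. The hypothesis forces this ideal to be zero, giving $2c=0$, and $2$-torsion freeness then yields $c=0$, i.e.\ $\delta$ is additive.

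I expect the one point requiring care, and the main obstacle, to be this last step: the computation directly produces only the inclusion $2cR \subseteq Z(R)$, and in a non-unital setting this says less than ``$2c$ is central'' on its own. The resolution is to observe that centrality of $c$ is exactly what lets one enlarge $2cR$ to a genuine central two-sided ideal (by throwing in $\mathbb{Z}\cdot 2c$), at which point the no-central-ideal assumption can be applied to conclude. The rest of the argument is routine bookkeeping of the central ``error terms'' introduced each time (A) or (B) is invoked.
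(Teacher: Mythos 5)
Your proof is correct and follows essentially the same route as the paper: evaluate $\delta\bigl((x+y)\circ z\bigr)$ in two ways using (A) and (B), conclude that $c\circ z=2cz$ lies in $Z(R)$ for all $z$, and then use the no-nonzero-central-ideal hypothesis together with $2$-torsion freeness to force $c=0$. The only (immaterial) difference is the endgame in the non-unital setting: the paper first strips the $2$, deduces $Rc=cR=(0)$, and finishes via the annihilator $A(R)$ being a central ideal, whereas you form the central ideal generated by $2c$ (adjoining $\mathbb{Z}\cdot 2c$) and strip the $2$ last — both handle the lack of a unit correctly.
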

\begin{proof}
  Let $\delta$ be a $CE-$Jordan derivation of $R$. In view of condition $(A),$ for any $x,y,z\in R,$ we have
		\begin{equation}\label{eq-1}
		\delta(x+y)=\delta(x)+\delta(y)+c_{\delta(x,y,+)},
		\end{equation}
		where $c_{\delta(x,y,+)}\in Z(R).$ There exists some $c_{\delta(z,x+y,\circ)}\in Z(R)$ such that
		\begin{eqnarray}\label{eq-2}
		\delta(z\circ(x+y)) &=& \delta(z)\circ(x+y)+z\circ\delta(x+y)+c_{\delta(z,x+y,\circ)}\notag \\
		&=& \delta(z)\circ x+\delta(z)\circ y+z\circ(\delta(x)+\delta(y)+c_{\delta(x,y,+)})+c_{\delta(z,x+y,\circ)}\notag \\
		&=& \delta(z)\circ x+\delta(z)\circ y+z\circ\delta(x)+z\circ\delta(y)+2zc_{\delta(x,y,+)}\notag\\
&&+c_{\delta(z,x+y,\circ)}.
		\end{eqnarray}
		Another way of looking at it is,
		\begin{eqnarray}\label{eq-3}
		\delta(z\circ(x+y)) &=& \delta(z\circ x+z\circ y)\notag \\
		&=& \delta(z\circ x)+\delta(z\circ y)+c_{\delta(z \circ x,z \circ y,+)}\notag \\
		&=& \delta(z)\circ x+z\circ\delta(x)+c_{\delta(z, x,\circ)}+\delta(z)\circ y+z\circ\delta(y)\notag\\
		&&+c_{\delta(z ,y, \circ)}+c_{\delta(z \circ x,z \circ y,+)},
		\end{eqnarray}
		where $c_{\delta(z \circ x,z \circ y,+)},c_{\delta(z, x,\circ)}$ and $c_{\delta(z ,y, \circ)}$ are the central elements.
\par Comparing (\ref{eq-2}) and (\ref{eq-3}), we conclude that $2zc_{\delta(x,y,+)}+c_{\delta(z,x+y,\circ)}=c_{\delta(z \circ x,z \circ y,+)}+c_{\delta(z, x,\circ)}+c_{\delta(z ,y, \circ)}\in Z(R)$. It forces that $Rc_{\delta(x,y,+)}\subseteq Z(R),$ where $c_{\delta(x,y,+)}$ is a fixed central element in $R,$ but $R$ has no nonzero central ideal, therefore $Rc_{\delta(x,y,+)}=(0).$ Likewise, we get $c_{\delta(x,y,+)}R=(0).$ It implies that $c_{\delta(x,y,+)}\in A(R),$ the annihilator of $R.$ But $A(R)$ is always a central ideal in $R,$ hence our hypothesis forces $A(R)=(0)$ and consequently $c_{\delta(x,y,+)}=0.$ From (\ref{eq-1}), we get $\delta(x+y)=\delta(x)+\delta(y)$ for all $x,y\in R,$ as desired.
\end{proof}

\begin{corollary}\label{cor-01}
Let $R$ be a 2-torsion free noncommutative prime ring. If $\delta$ is a $CE-$Jordan derivation of $R,$ then $\delta$ is additive.
\end{corollary}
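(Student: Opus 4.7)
The plan is to reduce this corollary directly to Lemma \ref{lemma-1} by checking that the only additional hypothesis there, namely the absence of a nonzero central ideal, is automatically satisfied in a noncommutative prime ring. The 2-torsion free assumption is already given, so the entire argument boils down to a standard primeness trick.

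More precisely, I would argue by contradiction: suppose $R$ admits a nonzero central ideal $I$, and pick $0 \neq a \in I$. Then $a \in Z(R)$, and for every $r \in R$ the element $ar$ again lies in $I \subseteq Z(R)$. Expanding $[ar,s]=0$ for arbitrary $s \in R$ and using that $a$ commutes with $s$, one obtains
\[
a[r,s]=0 \quad \text{for all } r,s \in R.
\]
Multiplying by an arbitrary $t \in R$ and using the centrality of $a$ to move $t$ past $a$ yields $aR[r,s]=(0)$. Since $R$ is prime and $a \neq 0$, this forces $[r,s]=0$ for all $r,s \in R$, so $R$ is commutative, contradicting the hypothesis. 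Hence $R$ has no nonzero central ideal.

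With this observation in hand, $R$ satisfies all hypotheses of Lemma \ref{lemma-1}, which immediately yields that $\delta$ is additive, completing the proof. There is really no substantive obstacle: the whole content of the corollary is the well-known fact that a noncommutative prime ring cannot contain a nonzero central ideal, after which Lemma \ref{lemma-1} does the work. (Alternatively, one could invoke Lemma \ref{L-10} by setting $a$ equal to the central element $c_{\delta(x,y,+)}$ produced inside the proof of Lemma \ref{lemma-1} to place it in a central ideal, and then again use primeness plus noncommutativity to kill it — but the direct reduction above is cleaner.)
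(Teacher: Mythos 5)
Your proposal is correct and matches the paper's intent exactly: the paper states this as an immediate consequence of Lemma \ref{lemma-1}, the only point to verify being that a noncommutative prime ring has no nonzero central ideal, which you establish by the standard argument ($a$ central and nonzero with $a[r,s]=0$ gives $aR[r,s]=(0)$, hence $[r,s]=0$ by primeness). Your explicit verification of this fact fills in precisely the step the paper leaves implicit.
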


Now, we are in position to state and prove the first result of this paper.
\begin{theorem}\label{Theorem-1}
Let $R$ be a $2$-torsion free noncommutative prime ring. If $R$ admits a nonzero $CE-$Jordan derivation $\delta:R\to R$ such that $[\delta(x),x]\in Z(R)$ for all $x\in R,$ then either $\delta=0$ or $R$ is an order in a central simple algebra of dimension at most $4$ over its center.
\end{theorem}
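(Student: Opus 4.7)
The plan is to leverage the structural results from Lemmas \ref{Fact-1} and \ref{L-4} to reduce $\delta$ to a concrete linear form, substitute it into the $CE$-Jordan identity, and then read off a low-degree polynomial identity that forces $s_4$. First I would apply Corollary \ref{cor-01} to know $\delta$ is additive, then Lemma \ref{Fact-1} (with $U=R$) to upgrade the centralizing hypothesis to $[\delta(x),x]=0$ for every $x\in R$, i.e.\ $\delta$ is commuting on $R$. Lemma \ref{L-4} then furnishes $\lambda\in C$ and an additive $\xi:R\to C$ with $\delta(x)=\lambda x+\xi(x)$ for all $x\in R$. By Remark \ref{rem-1} the $CE$-Jordan derivation condition is equivalent to $\delta(x^{2})-\delta(x)x-x\delta(x)\in Z(R)$, which upon substitution (and using that $\xi(x)\in C$ commutes with everything in $R$ inside $Q_{mr}(R)$) collapses, after absorbing $\xi(x^{2})\in C$, to
\[
\lambda x^{2}+2\xi(x)\,x\in C\quad\text{for all }x\in R.
\]

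I would then split into two cases. If $\lambda\neq 0$, then $\lambda$ is invertible in the field $C$, and the displayed identity rewrites as $x^{2}+\alpha(x)x+\beta(x)=0$ with $\alpha(x),\beta(x)\in C$; hence $R$ is algebraic of bounded degree $2$ over $C$. Lemma \ref{F-1} (implication (iii)$\Rightarrow$(ii)) together with noncommutativity then places $R$ inside $M_{2}(F)$ for some field $F$, so $R$ is an order in a central simple algebra of dimension at most $4$ over its center, as required.

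If instead $\lambda=0$, then $\delta(x)=\xi(x)\in R\cap C=Z(R)$, so $\delta(R)\subseteq Z(R)$. Feeding this back into condition (B) and using centrality cancellations (together with $2$-torsion freeness) yields $\delta(x)y+x\delta(y)\in Z(R)$ for all $x,y\in R$; specializing $y=x$ and commuting with an arbitrary $r\in R$ produces $\delta(x)[x,r]=0$. Replacing $r$ by $rs$ and iterating gives $\delta(x)\,R\,[x,s]=0$, so primeness forces, for each $x$, either $\delta(x)=0$ or $x\in Z(R)$. Since $R$ is noncommutative, fixing any noncentral $x_{0}$ one has $x_{0}+z$ noncentral for every $z\in Z(R)$, so $\delta(x_{0}+z)=0$; additivity then forces $\delta(z)=0$ and hence $\delta\equiv 0$ on all of $R$.

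The main technical hazard is keeping the extended centroid $C$ and the center $Z(R)$ in sync: the output of the $CE$-Jordan remainder lies a priori only in $Z(R)\subseteq C$, but once $\xi(x^{2})\in C$ is absorbed the natural place to work is inside $C$, which is exactly what is needed both to invert $\lambda$ and to match clause (iii) of Lemma \ref{F-1}. The other delicate point is the $\lambda=0$ sub-case, which must yield $\delta=0$ rather than the $s_{4}$ conclusion; extracting a pointwise dichotomy from primeness and then patching it on all of $R$ via additivity and noncommutativity is the step to execute carefully.
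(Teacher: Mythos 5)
Your argument is correct, and it shares the paper's skeleton (Corollary \ref{cor-01} plus Lemma \ref{Fact-1} to get $[\delta(x),x]=0$, then Lemma \ref{L-4} to write $\delta(x)=\lambda x+\xi(x)$, then a case split on $\lambda$), but the way you extract the $s_{4}$ conclusion is genuinely different. The paper polarizes $[\delta(x),x]=0$, substitutes $y\mapsto x\circ y$, and uses condition (B) to reach $2\sigma(x)[y,x]+\lambda[y,x^{2}]=0$, whence $\lambda[[y,x^{2}],[y,x]]=0$ and clause (iv) of Lemma \ref{F-1}; you instead feed the linear form of $\delta$ directly into the quadratic relation $\delta(x^{2})-\delta(x)x-x\delta(x)\in Z(R)$ of Remark \ref{rem-1}, obtaining $\lambda x^{2}+2\xi(x)x\in C$, and when $\lambda\neq 0$ you invert $\lambda$ (using that the extended centroid of a prime ring is a field) to land in clause (iii), algebraicity of bounded degree $2$ over $C$. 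Your route avoids polarization and the anticommutator computation entirely, at the mild cost of explicitly invoking invertibility in $C$ (the paper only needs that a nonzero $\lambda\in C$ is not a zero divisor, which rests on the same fact). In the $\lambda=0$ case you also improve on the paper: from $\delta(x)\in Z(R)$ you get $\delta(x)R[x,s]=0$, hence the pointwise dichotomy $\delta(x)=0$ or $x\in Z(R)$, and your trick of writing a central $z$ as $(x_{0}+z)-x_{0}$ with $x_{0}$ noncentral gives $\delta\equiv 0$ directly, which is cleaner than the paper's appeal to the impossibility of writing $R$ as a union of the two additive subgroups $\mathfrak{U}$ and $\mathfrak{V}$. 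Finally, your passage from clause (ii)/(iii) of Lemma \ref{F-1} to ``order in a central simple algebra of dimension at most $4$'' is at exactly the same level of rigor as the paper's own jump from $s_{4}$, so no gap there either.
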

\begin{proof}
By the hypothesis, we have $[\delta(x),x]\in Z(R)$ for all $x\in R.$ In view of Corollary \ref{cor-01}, $\delta$ is additive and hence from Lemma \ref{Fact-1}, it follows that
\begin{equation}\label{eq-4}
[\delta(x),x]=0 \mbox{~for~all~}x\in R.
\end{equation}
Since $\delta$ is an additive and commuting function, by Lemma \ref{L-4}, there exists $\lambda \in C$ (extended centroid of $R$) and an additive mapping $\sigma : R \to C$ such that
\begin{equation}\label{eq-5}
\delta(x)=\lambda x +\sigma(x)\mbox{~for~all~}x\in R.
\end{equation}
Polarizing (\ref{eq-4}), we have
\[
[\delta(x),y]+[\delta(y),x]=0\mbox{~for~all~}x,y\in R.
\]
Replacing $y$ by $x \circ y,$ we get
\[
[\delta(x), x \circ y]+[\delta(x \circ y),x]=0\mbox{~for~all~}x,y\in R.
\]
It implies
\[
[\delta(x), x \circ y]+[\delta(x) \circ y,x]+[x \circ \delta(y),x]=0\mbox{~for~all~}x,y\in R.
\]
Using (\ref{eq-5}) in the preceding relation to get
\[
[\lambda x+\sigma(x), x \circ y]+[(\lambda x+\sigma(x)) \circ y,x]+[x \circ (\lambda y+\sigma(y)),x]=0,
\]
that is
\begin{equation}\label{eq-6}
[\lambda x, x \circ y]+[\lambda(x\circ y),x]+[\sigma(x) \circ y, x]+[\lambda(x\circ y),x]+[x\circ\sigma(y),x]=0\mbox{~for~all~}x,y\in R.
\end{equation}
It follows that $R$ satisfies
\begin{equation}\label{eq-7}
\lambda [ x, x \circ y]+\lambda [ x \circ y,x]+2\sigma(x)[ y, x]+\lambda [x \circ y,x]=0,
\end{equation}
and so
\begin{equation}\label{eq-8}
2\sigma(x)[ y, x]+\lambda [x \circ y,x]=0\mbox{~for~all~}x,y\in R.
\end{equation}
Further, it implies
\[
2\sigma(x)[y,x]+\lambda [y,x^{2}]=0\mbox{~for~all~}x,y\in R,
\]
and a fortiori
\[
\lambda [[y,x^{2}],[y,x]]=0\mbox{~for~all~}x,y\in R.
\]
It implies that either $\lambda=0$ or $[[y,x^{2}],[y,x]]=0$ for all $x,y\in R.$ By Lemma \ref{F-1}, the latter case is equivalent to the $s_{4}$ identity and $R$ is assumed to be noncommutative, therefore $R$ is an order in a central simple algebra of dimension at most 4 over $Z(R).$
\par On the other hand, let us assume that $\lambda=0.$ Then from (\ref{eq-8}), we have
\[
2\sigma(x)[y,x]=0\mbox{~for~all~}x,y\in R.
\]
Using the restriction on torsion of $R,$ we have
\[
\sigma(x)[y,x]=0\mbox{~for~all~}x,y\in R.
\]
Since $R$ is a prime ring, for each $x\in R,$ either $\sigma(x)=0$ or $[R,x]=(0).$
Put $\mathfrak{U}=\{x \in R:~\sigma(x)=0\}$ and $\mathfrak{V}=\{x \in R:~[R,x]=(0)\}.$ Therefore, we note that $R$ can be written as the set-theoretic union of the additive subgroups $\mathfrak{U}$ and $\mathfrak{V},$ which is not possible. Thus, we have either $R=\mathfrak{U}$ or $R =\mathfrak{V}.$ It implies that either $\sigma(x)=0$ for all $x\in R$ or $[R,x]=(0)$ for all $x\in R$. If $\sigma(x)=0$ for all $x\in R,$ then from (\ref{eq-5}), we find $\delta(x)=0$ for all $x \in R$. In the other case $R$ is a commutative ring; which leads a contradiction. This completes the proof.
\end{proof}


\begin{theorem}\label{Theorem-2}
Let $R$ be a 2-torsion free noncommutative prime ring with involution $'\ast'$ that admits a $CE-$Jordan derivation $\delta:R\to R$ such that $[\delta(x),x^{\ast}]\in Z(R)$ for all $x\in R.$ Then either $\delta=0$ or $R$ is an order in a central simple algebra of dimension at most $4$ over its center.
\end{theorem}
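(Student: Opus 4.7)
My plan mirrors the proof of Theorem~\ref{Theorem-1}, with the involution supplying the twist.

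First, I would reduce $\delta$ to a normal form. Corollary~\ref{cor-01} makes $\delta$ additive, and Lemma~\ref{Fact-2} sharpens the hypothesis to $[\delta(x), x^*] = 0$ for every $x \in R$. Applying the involution gives $[\delta(x)^*, x] = 0$, so the additive self-mapping $\phi : x \mapsto \delta(x)^*$ is commuting on $R$. Lemma~\ref{L-4} then produces $\lambda \in C$ and an additive map $\xi : R \to C$ with $\phi(x) = \lambda x + \xi(x)$; using that the involution extends to $Q_{mr}(R)$ and restricts to an involution on $C$, applying it once more yields
\[
\delta(x) = \mu x^* + \tau(x), \qquad \mu := \lambda^* \in C,\ \tau := \xi(\cdot)^* : R \to C.
\]

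Next I would feed this form into the CE-Jordan relation. By Remark~\ref{rem-1} it is equivalent to $\delta(x^2) - \delta(x) \circ x \in Z(R)$. Substituting and using the expansion $x^{*2} - x^* \circ x = (x - x^*)^2 - x^2$, I obtain
\[
\mu\bigl((x-x^*)^2 - x^2\bigr) - 2\tau(x)\,x \in Z(R),
\]
and commuting with an arbitrary $r \in R$ kills the central piece $\tau(x^2)$ to leave the key identity
\[
\mu\bigl[(x-x^*)^2 - x^2,\ r\bigr] = 2\tau(x)\,[x, r] \qquad \text{for all } x, r \in R. \quad (\ast)
\]

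The argument now splits on $\mu$. If $\mu = 0$, then $(\ast)$ reduces to $\tau(x)[x, r] = 0$; since $\tau(x) \in C$ acts faithfully and $R$ is prime, for each fixed $x$ either $\tau(x) = 0$ or $x \in Z(R)$, and the group-cover argument used at the end of the proof of Theorem~\ref{Theorem-1} together with noncommutativity forces $\tau \equiv 0$, whence $\delta \equiv 0$. If $\mu \neq 0$, I aim at $s_4$. Specializing $(\ast)$ to $x \in S(R)$, where $x - x^* = 2x$, gives $3\mu x^2 - 2\tau(x)\,x \in Z(R)$ for every $x \in S(R)$. Applying the involution to this relation and combining with itself, together with the invertibility of $\mu$ in the field $C$ and a short analysis of whether the induced involution on $C$ is of the first or second kind, isolates $x^2 \in Z(R)$ for every $x \in S(R)$. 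The standard polarization $(x + y)^2 \in Z(R)$ then yields $x \circ y \in Z(R)$ for $x, y \in S(R)$, which a direct anti-commutator manipulation upgrades to $[S(R)^2, S(R)^2] = (0)$. Lemma~\ref{L-7} delivers $s_4$, and Lemma~\ref{F-1} together with noncommutativity realizes $R$ as an order in a central simple algebra of dimension at most $4$ over its center.

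The main obstacle I foresee is the $\mu \neq 0$ endgame: extracting $s^2 \in Z(R)$ for $s \in S(R)$ cleanly from $3\mu s^2 - 2\tau(s)\,s \in Z(R)$, which requires separating the first- and second-kind involution cases. A safety net is the bilinearization of $(\ast)$ via $x \mapsto x + y$, yielding
\[
\mu\bigl((x - x^*) \circ (y - y^*)\bigr) - \mu(x \circ y) - 2\tau(x)\,y - 2\tau(y)\,x \in Z(R),
\]
from which specializations to $x, y \in S(R)$ eliminate the $\tau$-contribution directly before invoking Lemma~\ref{L-7}.
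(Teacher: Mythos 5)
Your opening reduction (additivity via Corollary \ref{cor-01}, Lemma \ref{Fact-2}, applying the involution and Lemma \ref{L-4} to get $\delta(x)=\mu x^{\ast}+\tau(x)$ with $\mu\in C$, $\tau:R\to C$) coincides with the paper's, and your $\mu=0$ branch is sound. The genuine gap is the $\mu\neq 0$ endgame, precisely in the case you hope to dispatch with a ``short analysis'': when the induced involution on $C$ is of the second kind. From $3\mu k^{2}-2\tau(k)k\in C$ for $k\in S(R)$ and its image under the involution, $3\mu^{\ast}k^{2}+2\tau(k)^{\ast}k\in C$, the only thing cross-multiplying by $\mu^{\ast},\mu$ (or by $\tau(k)^{\ast},\tau(k)$) yields is that $\bigl(\mu^{\ast}\tau(k)+\mu\tau(k)^{\ast}\bigr)k$ (resp.\ $\bigl(\mu^{\ast}\tau(k)+\mu\tau(k)^{\ast}\bigr)k^{2}$) centralizes $R$; hence for each $k\in S(R)$ either $\mu^{\ast}\tau(k)+\mu\tau(k)^{\ast}=0$ or $k\in Z(R)$, and the subgroup argument leaves you with either $S(R)\subseteq Z(R)$ (fine) or $\mu^{\ast}\tau(k)+\mu\tau(k)^{\ast}=0$ identically on $S(R)$. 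In the second-kind case the latter is a perfectly consistent coefficient configuration with $\tau|_{S(R)}\neq 0$ (e.g.\ $\mu$ symmetric and $\tau$ skew-valued, which is available exactly because $C$ then contains nonzero skew elements), so the linear term $2\tau(k)k$ is \emph{not} eliminated and $x^{2}\in Z(R)$ is not isolated; all you retain is that every skew element satisfies a quadratic over $C$, which is neither $[S(R)^{2},S(R)^{2}]=(0)$ nor anything Lemmas \ref{L-7}, \ref{L-8}, \ref{L-9} accept. Your safety-net bilinearization does not help: applying the same involution maneuver to $3\mu(x\circ y)-2\tau(x)y-2\tau(y)x\in C$ returns the same quantity $\mu^{\ast}\tau(x)+\mu\tau(x)^{\ast}$ and the same dichotomy. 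This is exactly why the paper's proof spends its long Case 1 on the second kind: it fixes a skew $z_{c}\in C$, uses Lemma \ref{L-2} to get a dense ideal $J$ with $z_{c}J\subseteq R$, proves $\sigma(rz_{c})^{\ast}=\sigma(r)^{\ast}z_{c}$, and only after several further substitutions forces either $[h,x^{\ast}]=0$ (then $s_{4}$ via Lemma \ref{L-8}) or $\lambda=0$. Your proposal has no counterpart to that machinery.

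Two smaller points. In the first-kind case your scheme does force $\tau|_{S(R)}=0$ and hence $k^{2}$ central for all skew $k$, essentially reproducing the paper's Case 2; but your final step ``$x\circ y\in Z(R)$ for $x,y\in S(R)$ upgrades by a direct anti-commutator manipulation to $[S(R)^{2},S(R)^{2}]=(0)$'' is not a direct manipulation: with $k^{2}$ and $k\circ l$ central one only reduces $[k_{1}l_{1},k_{2}l_{2}]$ to an expression with central coefficients, not to $0$. The safe route is the paper's: read $[k^{2},h]=0$ as $d_{h}(k)k+kd_{h}(k)=0$ and invoke Lemma \ref{L-9}, then Lemma \ref{L-8}. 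Finally, note the involution does not literally extend to $Q_{mr}(R)$ (it interchanges right and left quotient rings); what you need, and what the paper also uses, is only its induced action on $C$ — worth stating correctly, but not a substantive obstacle.
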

\begin{proof}
Let us assume that $[\delta(x),x^{\ast}]\in Z(R)$ for all $x\in R.$ With the aid of Corollary \ref{cor-01} and Lemma \ref{Fact-2}, we have
\begin{equation}\label{eq-9}
[\delta(x),x^{\ast}]=0\mbox{~for~all~}x\in R.
\end{equation}
Applying involution in (\ref{eq-9}), we get
\begin{equation}\label{eq-10}
[\delta(x)^{\ast},x]=0\mbox{~for~all~}x\in R.
\end{equation}
In view of Lemma \ref{L-4}, there exists $\lambda\in C$ and an additive mapping $\sigma:R\to C$ such that
\[
\delta(x)^{\ast}=\lambda x+\sigma(x)\mbox{~for~all~}x\in R.
\]
It implies
\begin{equation}\label{eq-11}
\delta(x)=\lambda^{\ast}x^{\ast}+\sigma(x)^{\ast}\mbox{~for~all~}x\in R.
\end{equation}
We now split the proof into two parts, as follows:\\
\textbf{Case 1.} Suppose that there exists a non-zero element $c\in C$ such that $c^{\ast}\neq c.$ Let $ c^{\ast}-c=z_{c}.$ Clearly $z_{c}^\ast=-z_{c}\neq 0.$
Also $z_{c}\in C$. By Lemma \ref{L-2}, there exists a nonzero ideal $J$ of $R$ such that $z_{c}J \subseteq R$.
Polarizing (\ref{eq-9}), we get
\begin{equation}\label{eq-12}
[\delta(x),y^{\ast}]+[\delta(y),x^{\ast}]=0\mbox{~for~all~}x,y\in R.
\end{equation}
Replacing $y$ by $y \circ r$ in (\ref{eq-12}), where $r \in J,$ we get
\begin{equation}\label{eq-13}
[\delta(x),(y \circ r)^{\ast}]+[\delta(y \circ r),x^{\ast}]=0.
\end{equation}
Therefore, we have
\begin{equation}\label{eq-14}
[\delta(x),y^{\ast} \circ r^{\ast}]+[\delta(y) \circ r,x^{\ast}]+[y \circ \delta(r),x^{\ast}]=0\mbox{~for~all~}x,y\in R, r \in J.
\end{equation}
In particular for $y=h\in H(R)$ in (\ref{eq-14}), we get
\[
[\delta(x),r^{\ast} \circ h]+[r \circ \delta(h),x^{\ast}]+[\delta(r) \circ h,x^{\ast}]=0.
\]
Using (\ref{eq-11}) in the above expression, we obtain
\[
\lambda^{\ast}[x^{\ast},r^{\ast} \circ h]+\lambda^{\ast}[r^{\ast} \circ h,x^{\ast}]+\lambda^{\ast}[r \circ h,x^{\ast}]+2\sigma(r)^{\ast}[h,x^{\ast}]+2\sigma(h)^{\ast}[r,x^{\ast}]=0.
\]
It implies
\begin{equation}\label{eq-15}
\lambda^{\ast}[r \circ h,x^{\ast}]+2\sigma(r)^{\ast}[h,x^{\ast}]+2\sigma(h)^{\ast}[r,x^{\ast}]=0\mbox{~for~all~}x\in R, r \in J, h \in H(R).
\end{equation}
Replacing $r$ by $z_{c}r=rz_{c}$ in (\ref{eq-15}), we get
\begin{equation}\label{eq-16}
\lambda^{\ast}[r \circ h,x^{\ast}]z_{c}+2\sigma(rz_{c})^{\ast}[h,x^{\ast}]+2\sigma(h)^{\ast}[r,x^{\ast}]z_{c}=0\mbox{~for~all~}x\in R, r \in J, h \in H(R).
\end{equation}
Multiply (\ref{eq-15}) by $z_{c}$ and comparing with (\ref{eq-16}), we obtain
\[
(\sigma(rz_{c})^{\ast}-\sigma(r)^{\ast}z_{c})[h,x^{\ast}]=0.
\]
Therefore primeness of $R$ implies that either $[h,x^{\ast}]=0$ for all $h \in H(R), x \in R$ or $\sigma(rz_{c})^{\ast}=\sigma(r)^{\ast}z_{c}$. In the former case, $R$ satisfies $s_{4}$ identity by Lemma \ref{L-8}. But $R$ is noncommutative, therefore $R$ is an order in a central simple algebra of dimension at most $4$ over its center. In the latter case, we have
\begin{equation}\label{eq-17}
\sigma(rz_{c})^{\ast}=\sigma(r)^{\ast}z_{c}.
\end{equation}
Replacing $r$ by $rz_{c}$ in (\ref{eq-14}), we obtain
\[
[\delta(x),y^{\ast}\circ r^{\ast}]z_{c}^{\ast}+[\delta(y)\circ r,x^{\ast}]z_{c}+[y\circ \delta(rz_{c}),x^{\ast}]=0.
\]
It implies
\begin{equation}\label{eq-18}
-[\delta(x),y^{\ast}\circ r^{\ast}]z_{c}+[y\circ \delta(rz_{c}),x^{\ast}]+[\delta(y)\circ r,x^{\ast}]z_{c}=0\mbox{~for~all~}x,y\in R, r \in J.
\end{equation}
Multiplying (\ref{eq-14}) by $z_c$ and then adding it into (\ref{eq-18}), we find
\[
[y\circ \delta(rz_{c}),x^{\ast}]+[y\circ \delta(r),x^{\ast}]z_{c}+2[\delta(y)\circ r,x^{\ast}]z_{c}=0\mbox{~for~all~}x,y\in R, r \in J,
\]
which by virtue of (\ref{eq-11}), leads to
\[
[y\circ (\lambda^{\ast}r^{\ast}z_{c}^{\ast}+\sigma(rz_{c})^{\ast}),x^{\ast}]+[y\circ (\lambda^{\ast}r^{\ast}+\sigma(r)^{\ast}),x^{\ast}]z_{c}+2[\delta(y)\circ r,x^{\ast}]z_{c}=0.
\]
An application of (\ref{eq-17}) yields
\[
[y\circ (-\lambda^{\ast}r^{\ast}+\sigma(r)^{\ast}),x^{\ast}]z_{c}+[y\circ (\lambda^{\ast}r^{\ast}+\sigma(r)^{\ast}),x^{\ast}]z_{c}+2[\delta(y)\circ r,x^{\ast}]z_{c}=0.
\]
It follows that
\[
2[y\circ \sigma(r)^{\ast},x^{\ast}]z_{c}+2[\delta(y)\circ r ,x^{\ast}]z_{c}=0\mbox{~for~all~}x,y\in R, r \in J.
\]
Since $z_{c} \neq 0,$ it implies that
\[
2(2\sigma(r)^{\ast}[y,x^{\ast}]+[r \circ \delta(y),x^{\ast}])=0\mbox{~for~all~}x,y\in R, r \in J.
\]
Using 2-torsion freeness of $R$ and (\ref{eq-11}), we have
\[
2\sigma(r)^{\ast}[y,x^{\ast}]+[r \circ (\lambda^{\ast}y^{\ast}+\sigma(y)^{\ast}),x^{\ast}]=0.
\]
It implies
\begin{equation}\label{eq-19}
2\sigma(r)^{\ast}[y,x^{\ast}]+\lambda^{\ast}[r \circ y^{\ast},x^{\ast}]+2\sigma(y)^{\ast}[r,x^{\ast}]=0\mbox{~for~all~}x,y\in R, r \in J.
\end{equation}
In particular, for $y\in J,$ we replace $y$ by $yz_{c}$ and get
\[
2\sigma(r)^{\ast}[y,x^{\ast}]z_{c}-\lambda^{\ast}[r \circ y^{\ast},x^{\ast}]z_{c}+2\sigma(yz_{c})^{\ast}[r,x^{\ast}]=0.
\]
Using (\ref{eq-17}), we have
\begin{equation}\label{eq-20}
2\sigma(r)^{\ast}[y,x^{\ast}]z_{c}-\lambda^{\ast}[r \circ y^{\ast},x^{\ast}]z_{c}+2\sigma(y)^{\ast}[r,x^{\ast}]z_{c}=0.
\end{equation}
Multiplying (\ref{eq-19}) by $z_{c}$ and then subtract it from (\ref{eq-20}), we conclude that
\[
\lambda^{\ast}[r \circ y^{\ast},x^{\ast}]z_{c}=0\mbox{~for~all~}x\in R,~r,y\in J.
\]
By the primeness of $R$ we have either $[r \circ y,x]=0$ or $\lambda=0$. Since $R$ is noncommutative, the first situation cannot occur; consequently $\lambda=0.$ Thus $\delta(x)=\sigma(x)^{\ast}$ for all $x\in R.$ From relation (B), we have
\[
\sigma(x \circ y)^{\ast}-\sigma(x)^{\ast} \circ y- x \circ \sigma(y)^{\ast} \in Z(R)\mbox{~for~all~}x,y\in R,
\]
that is
\[
2\sigma(x)^{\ast}[y,x]=0\mbox{~for~all~}x,y\in R.
\]
Using $2$-torsion freeness hypothesis, we find
\[
\sigma(x)^{\ast}[y,x]=0\mbox{~for~all~}x,y\in R.
\]
Further proceeding as Theorem \ref{Theorem-1}, we are done in this case.\\
\textbf{Case 2.} Let $c^{\ast}=c$ for all $c\in C.$ Polarizing (\ref{eq-9}), we have
\begin{equation}\label{eq-21}
[\delta(x),y^{\ast}]+[\delta(y),x^{\ast}]=0\mbox{~for~all~}x,y\in R.
\end{equation}
Replacing $y$ by $h\circ k,$ where $k\in S(R)$ and $h \in H(R),$ we have
\begin{equation}\label{eq-22}
-[\delta(x),h \circ k]+[\delta(h) \circ k,x^{\ast}]+[h \circ \delta(k),x^{\ast}]=0.
\end{equation}
In view of (\ref{eq-11}), it follows that
\[
-[\lambda x^{\ast},h \circ k]+[(\lambda h+\sigma(h))\circ k,x^{\ast}]+ [h \circ (-\lambda k+\sigma(k)),x^{\ast}]=0.
\]
It implies
\begin{equation}\label{eq-23}
-\lambda [x^{\ast},h \circ k]+2\sigma(h)[k,x^{\ast}]+2\sigma(k)[h,x^{\ast}]=0\mbox{~for~all~}x\in R, h \in H(R), k \in S(R).
\end{equation}
Replacing $x$ by $k$ in (\ref{eq-23}), we find
\begin{equation}\label{eq-24}
\lambda [k^{2},h]-2\sigma(k)[h,k]=0\mbox{~for~all~}h \in H(R), k \in S(R).
\end{equation}
Taking involution on both side in (\ref{eq-24}) and using $z^{\ast}=z$ for all $z \in C,$ we find
\begin{equation}\label{eq-25}
\lambda [k^{2},h]+2\sigma(k)[h,k]=0\mbox{~for~all~}h \in H(R), k \in S(R).
\end{equation}
Adding (\ref{eq-25}) and (\ref{eq-24}), we obtain
\begin{equation}\label{eq-26}
2\lambda [k^{2},h]=0\mbox{~for~all~}h\in H(R), k \in S(R).
\end{equation}
\par Thereby 2-torsion freeness and primeness of $R$ implies that either $\lambda=0$ or $[k^{2},h]={0}$ for all $k\in S(R)$ and $h\in H(R).$ Latter case implies $d_{h}(k)k+kd_{h}(k)=0$ for all $k\in S(R),$ where $d_{h}$ is the inner derivation induced by $h.$ In view of Lemma \ref{L-9}, $R$ satisfies $s_{4}$ or $d_{h}=0.$
Therefore either $R$ satisfies $s_{4}$ or $H(R)\subseteq Z(R).$ By Lemma \ref{L-8}, in each of the situation, $R$ satisfies $s_{4}$. But $R$ is noncommutative, hence $R$ is an order in a central simple algebra of dimension at most $4$ over its center.
\par We now consider $\lambda=0.$ Using (B) and (\ref{eq-11}), we have
\[
\sigma(x \circ y)-\sigma(x) \circ y- x \circ \sigma(y) \in Z(R)\mbox{~for~all~}x,y\in R.
\]
Therefore
\[
2\sigma(x)[y,x]=0\mbox{~for~all~}x,y\in R.
\]
Further proceeding as the proof of Theorem \ref{Theorem-1}, we get the conclusion.
\end{proof}

\section{Results on centrally extended Jordan $\ast$-derivation}
Let $R$ be a ring with involution $'\ast'.$ In \cite{El-Deken2019}, El-Deken and Nabiel introduced the notion of \emph{centrally extended $\ast$-derivation} and investigated the case when centrally extended $\ast$-derivations are $\ast$-derivations. More specifically, they established the following result: \emph{If R is a semiprime $\ast$-ring with no nonzero central ideals, then every centrally extended $\ast$-derivation $d$ on $R$ is a $\ast$-derivation}. Motivated by the concept of centrally extended $\ast$-derivations, we now introduce the notion of centrally extended Jordan $\ast$-derivation as follows:

\begin{definition}
A mapping $\delta:R\to R$ that satisfies
\begin{equation}\tag{C}
\delta(x+y)-\delta(x)-\delta(y)\in Z(R),
\end{equation}
\begin{equation}\tag{D}
\delta(x\circ y)-\delta(x)y^{\ast}-x\delta(y)-\delta(y)x^{\ast}-y\delta(x)\in Z(R)
\end{equation}
for all $x,y \in R$ is called \emph{centrally extended Jordan $\ast$-derivation} of $R$. We shall abbreviate this map as $CE-$Jordan $\ast$-derivation.
\end{definition}

\begin{example}\label{example-2}
Let $R=\bigg\{\left(
           \begin{array}{ccc}
             0 & a & b \\
             0 & 0 & c \\
             0 & 0 & 0 \\
           \end{array}
         \right):~a,b,c\in\mathbb{Z}
\bigg\}$ be a ring. Define mapping $\delta,\ast:R\to R$ by
$$\delta\left(\begin{array}{ccc}
0 & a & b\\
0 & 0 & c\\
0 & 0 & 0\\
\end{array} \right)=\left(\begin{array}{ccc}
0 & 0 & k\\
0 & 0 & 0\\
0 & 0 & 0\\
\end{array} \right),
\left(\begin{array}{ccc}
0 & a & b\\
0 & 0 & c\\
0 & 0 & 0\\
\end{array} \right)^{\ast}=\left(\begin{array}{ccc}
0 & c & b\\
0 & 0 & a\\
0 & 0 & 0\\
\end{array} \right)
$$ where $k$ is a fixed integer. Then, it is straight forward to check that $\delta$ is a $CE-$Jordan $\ast$-derivation of $R,$ with involution $'\ast'$ of $R.$
\end{example}

\begin{remark}\label{rem-2}
If $R$ is a 2-torsion free noncommutative prime ring with involution $'\ast'$, then an additive map $d$ is a $CE-$Jordan $\ast$-derivation if and only if $d(x^{2})-d(x)x^{\ast}-xd(x)\in Z(R)$ for all $x\in R.$ It is observed that, in this case, a $CE-$Jordan $\ast$-derivation is not necessarily a Jordan $\ast$-derivation or $CE$ $\ast$-derivation, for example:
\\Let $\mathbb{Z}$ be the ring of integers and $R=\left\{ \left(\begin{array}{cc}
a & b \\
c & d
\end{array} \right) |~a,b,c,d \in \mathbb{Z} \right\},$ a prime ring. Define a mapping $\ast:R\to R$ by
$\left( \begin{array}{cc}
a & b \\
c & d
\end{array} \right)^\ast= \left( \begin{array}{cc}
d & -b \\
-c & a
\end{array} \right),$ which is an involution of $R$.
If $\delta:R\to R$ be a mapping such that $\delta\left(\begin{array}{cc}
a & b \\
c & d
\end{array} \right)=\left( \begin{array}{cc}
a & 0 \\
0 & a
\end{array} \right),$ then $\delta$ is a $CE-$Jordan $\ast$-derivation but neither Jordan $\ast$-derivation nor $CE$ $\ast$-derivation.
\end{remark}

\par In this section our focus will be on $CE-$Jordan $\ast$-derivations acting on 2-torsion free noncommutative prime rings. More specifically, we prove the $CE-$Jordan $\ast$-derivation analogy of the above results. We begin our discussions with the following lemma.

\begin{lemma}\label{lemma-2}
Let $R$ be a 2-torsion free semiprime ring with involution $'\ast'$ and with no nonzero central ideal. If $\delta$ is a $CE-$Jordan $\ast$-derivation of $R,$ then $\delta$ is additive.
\end{lemma}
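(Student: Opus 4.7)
The plan is to imitate the proof of Lemma \ref{lemma-1}, replacing condition (B) by the Jordan $\ast$-derivation identity (D), and then closing the argument using the semiprime $\ast$-structure lemmas (\ref{L-5}, \ref{L-6}, \ref{L-10}). Fix $x,y\in R$ and set $c:=\delta(x+y)-\delta(x)-\delta(y)$; condition (C) gives $c\in Z(R)$, and the goal is to show $c=0$.

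For arbitrary $z\in R$, I would compute $\delta(z\circ(x+y))$ in two ways. Expanding via (D) and substituting $\delta(x+y)=\delta(x)+\delta(y)+c$, the $c$-contribution enters only through the two summands $z\delta(x+y)$ and $\delta(x+y)z^{\ast}$; after recollecting the remaining terms into $\delta(z\circ x)+\delta(z\circ y)$ via (D), one obtains
\[
\delta(z\circ(x+y))\equiv \delta(z\circ x)+\delta(z\circ y)+c(z+z^{\ast})\pmod{Z(R)}.
\]
On the other hand, $z\circ(x+y)=(z\circ x)+(z\circ y)$ together with (C) gives $\delta(z\circ(x+y))\equiv\delta(z\circ x)+\delta(z\circ y)\pmod{Z(R)}$. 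Equating the two yields the key identity
\[
c(z+z^{\ast})\in Z(R)\qquad\text{for all }z\in R,
\]
the $\ast$-analogue of the cleaner relation $2zc\in Z(R)$ that appears in Lemma \ref{lemma-1}; here only the symmetric part of $z$ is controlled.

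Specialising $z=h\in H(R)$ gives $2ch\in Z(R)$, and $2$-torsion freeness promotes this to $ch\in Z(R)$, so $c[h,r]=0$ for every $h\in H(R)$ and $r\in R$. A direct check shows that the set $A:=\{r\in R:cr\in Z(R)\}$ is an additive subgroup closed under multiplication (from the expansion $c[ab,s]=ca[b,s]+c[a,s]b$, which vanishes when $a,b\in A$ since $c$ is central), hence $A$ is a subring of $R$ containing $H(R)$; consequently $A\supseteq\overline{H(R)}$, giving $c\,\overline{H(R)}\subseteq Z(R)$.

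The main obstacle is the last step: deducing $c=0$ from $c\,\overline{H(R)}\subseteq Z(R)$. My strategy is to prove $c[R,R]=0$ and then apply Lemma \ref{L-10} to obtain an ideal $I$ of $R$ with $c\in I\subseteq Z(R)$; the ``no nonzero central ideal'' hypothesis then forces $I=0$, whence $c=0$. Since $c[H(R),R]=0$ is already in hand, what remains is to prove $c[k,r]=0$ for $k\in S(R)$ and $r\in R$ -- precisely the skew-symmetric component that the identity $c(z+z^{\ast})\in Z(R)$ does not see. Here I plan to combine Lemmas \ref{L-5}--\ref{L-6} (which place $c\,\overline{H(R)}$ inside $Z(\overline{H(R)})$, making it a central ideal of the semiprime subring $\overline{H(R)}$) with further substitutions $z\mapsto rz$ and $z\mapsto zr$ in the key identity, producing relations between $c$, $c^{\ast}$ and $S(R)$-products, and then invoke semiprimeness of $R$ in its standard form (nilpotent central elements vanish) to extract the required commutator vanishing over all of $R$. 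This transfer from symmetric to skew-symmetric arguments of the commutator is the genuinely new step compared to Lemma \ref{lemma-1}, and is where I expect the bulk of the technical work.
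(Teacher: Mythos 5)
Your derivation of the key relation $c(z+z^{\ast})\in Z(R)$ and its consequence $c[\overline{H(R)},u]=0$ for all $u\in R$ is correct and coincides with the paper's computation (your subring argument $A=\{r\in R: cr\in Z(R)\}\supseteq \overline{H(R)}$ is a tidy equivalent of the paper's expansion of $c[hh',u]$). The genuine gap is the final step, which you leave as a plan rather than a proof: you never show $c[k,r]=0$ for $k\in S(R)$, $r\in R$, and the route you sketch cannot produce it. The relation $c(z+z^{\ast})\in Z(R)$ is already quantified over all $z\in R$, so the substitutions $z\mapsto rz$ and $z\mapsto zr$ merely return further instances of the same relation and give no grip on the skew-symmetric component -- which, as you yourself observe, is invisible to it. Consequently the claim $c[R,R]=0$, on which your intended application of Lemma \ref{L-10} inside $R$ rests, is unsupported, and your proposal does not prove the lemma.

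The paper closes the argument without ever proving $c[R,R]=0$, and its pivotal extra idea is one your sketch never touches: showing that $c^{\ast}=c$. From $cH(R)\subseteq Z(R)$ and its image under the involution one gets $(c^{\ast}-c)^{2}k\in Z(R)$ for all $k\in S(R)$ and $(c^{\ast}-c)^{2}h\in Z(R)$ for all $h\in H(R)$, hence $(c^{\ast}-c)^{2}R\subseteq Z(R)$; the no-central-ideal hypothesis forces $(c^{\ast}-c)^{2}=0$, and since the center of a semiprime ring has no nonzero nilpotents, $c^{\ast}=c$. With $c\in H(R)\subseteq\overline{H(R)}$ in hand, Lemma \ref{L-10} is applied inside the semiprime ring $\overline{H(R)}$ (semiprime by Lemma \ref{L-5}) to the relation $c[\overline{H(R)},\overline{H(R)}]=0$, producing an ideal $I$ of $\overline{H(R)}$ with $c\in I\subseteq Z(\overline{H(R)})\subseteq Z(R)$ by Lemma \ref{L-6}, after which the no-central-ideal hypothesis yields $c=0$. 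To repair your write-up you would need either to carry out this detour through the symmetric elements (in particular the proof that $c^{\ast}=c$) or to supply an actual derivation of $c[S(R),R]=0$, which neither your sketch nor the paper provides.
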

\begin{proof}
	For any $x,y,z\in R,$ in view of (C), it follows that there exists $c_{\delta(x,y,+)}\in Z(R)$ such that
	\begin{equation}\label{eq-27}
		\delta(x+y)=\delta(x)+\delta(y)+c_{\delta(x,y,+)},
	\end{equation}
	Using (D), we have
	\begin{eqnarray}\label{eq-28}
		\delta(z\circ(x+y)) &=& \delta(z)(x^{\ast}+y^{\ast})+\delta(x+y)z^{\ast}+(x+y)\delta(z)\notag\\
		&&+z\delta(x+y)+c_{\delta(z,x+y,\circ)} \notag \\
		&=&   \delta(z)(x^{\ast}+y^{\ast})+\bigg(\delta(x)+\delta(y)+c_{\delta(x,y,+)}\bigg)z^{\ast}+(x+y)\notag\\
		&&\delta(z)+z\bigg(\delta(x)+\delta(y)+c_{\delta(x,y,+)}\bigg)+c_{\delta(z,x+y,\circ)}\notag\\
		&=& \delta(z)(x^{\ast}+y^{\ast})+(\delta(x)+\delta(y))z^{\ast}+(x+y)\delta(z)\notag\\
		&&+z(\delta(x)+\delta(y))+c_{\delta(x,y,+)}(z+z^{\ast})+c_{\delta(z,x+y,\circ)},
	\end{eqnarray}
	where $c_{\delta(z,x+y,\circ)},c_{\delta(x,y,+)}$ are central elements.
	Computing in a different manner, we get
	\begin{eqnarray}\label{eq-29}
		\delta(z\circ(x+y)) &=& \delta(z\circ x+z\circ y)\notag \\
		&=& \delta(zx+xz)+\delta(zy+yz)+c_{\delta(z\circ x,z \circ y,+)}\notag \\
		&=& \bigg(\delta(z)x^{\ast}+\delta(x)z^{\ast}+z\delta(x)+x\delta(z)+c_{\delta(z,x,\circ)}\bigg)+\bigg(\delta(z)y^\ast+\delta(y)z^{\ast}+z\notag\\
		&&\delta(y)+y\delta(z)+c_{\delta(z,y,\circ)}\bigg)+c_{\delta(z\circ x,z \circ y,+)},
	\end{eqnarray}
	where $c_{\delta(z\circ x,z \circ y,+)},c_{\delta(z,x,\circ)},c_{\delta(z,y,\circ)}$ are central elements.
	\par Comparing (\ref{eq-28}) and (\ref{eq-29}) to conclude $(z+z^{\ast})c_{\delta(x,y,+)}+c_{\delta(z,x+y,\circ)}=c_{\delta(z,x,\circ)}+c_{\delta(z,y,\circ)}+c_{\delta(z\circ x,z \circ y,+)}.$ It forces $(z+z^{\ast})c_{\delta(x,y,+)}$ for all $z\in R,$ where $c_{\delta(x,y,+)}$ is a fixed central element in $Z(R).$ Replacing $z$ by $h,$ where $h \in H(R)$, we have $2hc_{\delta(x,y,+)} \in Z(R).$ Using 2-torsion freeness hypothesis, we get
	\begin{equation}{\label{eq-30}}
		hc_{\delta(x,y,+)} \in Z(R)\mbox{~for~all~}h \in H(R).
	\end{equation}
	It implies
	\[
	c_{\delta(x,y,+)}[h,u]=0\mbox{~for~all~}h\in H(R),~u \in R.
	\]
	Replacing $h$ by $hh',$ where $h' \in H(R),$ we get
	\begin{equation*}
		c_{\delta(x,y,+)}[hh^{'},u] = c_{\delta(x,y,+)}[h,u]h^{'}+c_{\delta(x,y,+)}h[h^{'},u]=0.
	\end{equation*}
	Therefore, it follows that
	\begin{equation}\label{eq-31}
		c_{\delta(x,y,+)}[\overline{H(R)},u]=(0)\mbox{~for~all~}u \in R,
	\end{equation}
	where $\overline{H(R)}$ is the ring generated by $H(R)$.
	In particular
	\begin{equation*}
		c_{\delta(x,y,+)}[\overline{H(R)},\overline{H(R)}]=(0).
	\end{equation*}
	In view of Lemma \ref{L-5}, $\overline{H(R)}$ is also a semi prime ring. Now, we shall first show that $c_{\delta(x,y,+)}\in H(R).$ Suppose that $c_{\delta(x,y,+)} \not\in H(R),$ i.e., $c_{\delta(x,y,+)}^{\ast} \neq c_{\delta(x,y,+)}.$ Then $(c_{\delta(x,y,+)}^{\ast}-c_{\delta(x,y,+)})k \in H(R)$ for all $k \in S(R)$. Using (\ref{eq-30}), we get
	\begin{equation}\label{eq-32}
		c_{\delta(x,y,+)}(c_{\delta(x,y,+)}^{\ast}-c_{\delta(x,y,+)})k \in Z(R)\mbox{~for~all~}k \in S(R).
	\end{equation}
	Applying involution, we have
	\begin{equation}\label{eq-33}
		c_{\delta(x,y,+)}^{\ast}(c_{\delta(x,y,+)}^{\ast}-c_{\delta(x,y,+)})k \in Z(R)\mbox{~for~all~}k \in S(R).
	\end{equation}
	Subtracting $(\ref{eq-32})$ from $(\ref{eq-33})$, we get
	\begin{equation}\label{eq-34}
		(c_{\delta(x,y,+)}^{\ast}-c_{\delta(x,y,+)})^{2}k \in Z(R)\mbox{~for~all~}k \in S(R).
	\end{equation}
	Using (\ref{eq-30}), we find  $c_{\delta(x,y,+)}^{\ast}h \in Z(R)$ for all $h \in H(R)$.
	An easy consequence of this is
	\begin{equation}\label{eq-35}
		(c_{\delta(x,y,+)}^{\ast}-c_{\delta(x,y,+)})h \in Z(R)\mbox{~for~all~}h \in H(R).
	\end{equation}
	Since $c_{\delta(x,y,+)}\in Z(R)$, left multiplying (\ref{eq-35}) by $c_{\delta(x,y,+)}^{\ast}-c_{\delta(x,y,+)}$ in order to find
	\begin{equation}\label{eq-36}
		(c_{\delta(x,y,+)}^{\ast}-c_{\delta(x,y,+)})^{2}h \in Z(R)\mbox{~for~all~}h \in H(R).
	\end{equation}
	Adding $(\ref{eq-34})$ and $(\ref{eq-36})$, we have
	\begin{equation}\label{eq-37}
		(c_{\delta(x,y,+)}^{\ast}-c_{\delta(x,y,+)})^{2}(h+k) \in Z(R)\mbox{~for~all~}h \in H(R),~k \in S(R).
	\end{equation}
	It yields
	\begin{equation*}
		(c_{\delta(x,y,+)}^{\ast}-c_{\delta(x,y,+)})^{2}R \subseteq Z(R).
	\end{equation*}
	Since $R$ does not contain non-zero central ideals, thus we have $(c_{\delta(x,y,+)}^{\ast}-c_{\delta(x,y,+)})^{2}=0,$ where $c_{\delta(x,y,+)}^{\ast}-c_{\delta(x,y,+)} \in Z(R)$. The fact that center of a semiprime ring has no nonzero nilpotent elements, forces $c_{\delta(x,y,+)}^{\ast}-c_{\delta(x,y,+)}=0$, a contradiction to our assumption. Hence $c_{\delta(x,y,+)}\in H(R).$
	\par
	Therefore in view of Lemma \ref{L-10}, our expression $c_{\delta(x,y,+)}[\overline{H(R)},\overline{H(R)}]=(0)$ implies that there exists an ideal $I \subseteq \overline{H(R)}$ such that $c_{\delta(x,y,+)} \in I \subseteq Z(\overline{H(R)})$. Using Lemma \ref{L-6}, we get  $c_{\delta(x,y,+)} \in I \subseteq Z(R)$. As $R$ does not contain non-zero central ideals, we get $c_{\delta(x,y,+)}=0;$ and hence $\delta$ is additive, by (\ref{eq-27}). It completes the proof.
\end{proof}

\begin{corollary}\label{cor-02}
Let $R$ be a 2-torsion free noncommutative prime ring with involution $'\ast'$. If $\delta$ is a $CE-$Jordan $\ast$-derivation of $R,$ then $\delta$ is additive.
\end{corollary}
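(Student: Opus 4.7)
The plan is to derive Corollary~\ref{cor-02} as an immediate consequence of Lemma~\ref{lemma-2}. Lemma~\ref{lemma-2} already establishes additivity of a $CE$-Jordan $\ast$-derivation under two hypotheses beyond those stated in the corollary, namely semiprimeness and the absence of nonzero central ideals. The work thus reduces to checking that a $2$-torsion free noncommutative prime ring with involution satisfies both of these additional assumptions.

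For the semiprimeness requirement there is nothing to do, since every prime ring is automatically semiprime. For the second assumption, I would verify directly that a noncommutative prime ring $R$ cannot contain a nonzero central ideal. Suppose, for contradiction, that $I$ is such an ideal and fix $0\neq a\in I\subseteq Z(R)$. Since $I$ is a two-sided ideal and $a$ is central, $ar\in I\subseteq Z(R)$ for every $r\in R$, which gives $[ar,s]=0=a[r,s]$ for all $r,s\in R$. For any $y\in R$, centrality of $a$ then yields $ay[r,s]=ya[r,s]=0$, so $aR[r,s]=(0)$ for every fixed pair $r,s\in R$. Primeness of $R$ combined with $a\neq 0$ forces $[r,s]=0$ for all $r,s\in R$, contradicting the assumption that $R$ is noncommutative.

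Hence $R$ meets all the hypotheses of Lemma~\ref{lemma-2}, and the additivity of $\delta$ follows. The argument is the exact structural analogue of the reduction of Corollary~\ref{cor-01} to Lemma~\ref{lemma-1} in the non-$\ast$ setting: the heavy lifting (handling the central error terms $c_{\delta(x,y,+)}$ and showing they must vanish) has already been carried out in the semiprime lemma with the help of Lemmas~\ref{L-5}, \ref{L-6} and \ref{L-10}, and the prime hypothesis enters only through the simple observation that rules out central ideals. Consequently there is no real obstacle in the proof of the corollary; it is purely a bookkeeping step citing the preceding lemma.
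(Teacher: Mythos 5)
Your proposal is correct and is exactly the route the paper intends: the corollary is the immediate specialization of Lemma~\ref{lemma-2}, since a prime ring is semiprime and, as you verify with the standard primeness argument, a noncommutative prime ring has no nonzero central ideal. Your explicit check of the no-central-ideal hypothesis is sound and fills in the only detail the paper leaves implicit.
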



\begin{theorem}\label{Theorem-3}
Let $R$ be a 2-torsion free noncommutative prime ring with involution $'\ast'$ that admits a $CE-$Jordan $\ast$-derivation $\delta:R\to R$ such that $[\delta(x),x]\in Z(R)$ for all $x\in R.$ Then either $\delta=0$ or $R$ is an order in a central simple algebra of dimension at most $4$ over its center.
\end{theorem}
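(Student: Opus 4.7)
The plan is to mirror the strategy of Theorem \ref{Theorem-1}, adapted to the Jordan $\ast$-identity (D). First I would invoke Corollary \ref{cor-02} for additivity of $\delta$, then Lemma \ref{Fact-1} to upgrade $[\delta(x),x]\in Z(R)$ to $[\delta(x),x]=0$ for every $x\in R$. Since $\delta$ is now an additive commuting self-map on a prime ring, Lemma \ref{L-4} supplies $\lambda\in C$ and an additive map $\xi:R\to C$ such that $\delta(x)=\lambda x+\xi(x)$.

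Next I would feed this decomposition into the $\ast$-Jordan identity. By the single-variable reduction in Remark \ref{rem-2}, $\delta(x^{2})-\delta(x)x^{\ast}-x\delta(x)\in Z(R)$; substituting $\delta(x)=\lambda x+\xi(x)$ the $\lambda x^{2}$ contributions cancel, and one is left with $\xi(x^{2})-\xi(x)(x+x^{\ast})-\lambda xx^{\ast}\in Z(R)$. Commuting this with an arbitrary $z\in R$ annihilates the $\xi$-terms because $\xi$ is $C$-valued, and yields
\[
\xi(x)[x+x^{\ast},z]+\lambda[xx^{\ast},z]=0
\]
for all $x,z\in R$. The pivotal step is then to specialize $x=k\in S(R)$: using $k+k^{\ast}=0$ and $kk^{\ast}=-k^{2}$ this collapses to $\lambda[k^{2},z]=0$ for every $k\in S(R)$ and $z\in R$. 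Primeness forces either $\lambda=0$ or $k^{2}\in Z(R)$ for all $k\in S(R)$; in the second case Lemma \ref{L-7} produces $s_{4}$, and Lemma \ref{F-1} together with the noncommutativity hypothesis places $R$ as an order in a central simple algebra of dimension at most $4$ over its center.

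To finish the argument I would handle the remaining case $\lambda=0$, where $\delta=\xi$ is $C$-valued. Expanding (D) under this hypothesis and commuting with $z\in R$ yields the relation $\xi(x)[y+y^{\ast},z]+\xi(y)[x+x^{\ast},z]=0$. Specializing first with $y=k\in S(R)$ and then $x=h\in H(R)$ distills out $\xi(k)[h,z]=0$; by primeness either $\xi(S(R))=0$ or $H(R)\subseteq Z(R)$, and the latter is settled by Lemma \ref{L-8} with $n=1$. Assuming $\xi(S(R))=0$, a second specialization with $x,y\in H(R)$ and 2-torsion freeness gives $\xi(h)[h,z]=0$; a Brauer-type union-of-additive-subgroups argument then leaves only $\xi(H(R))=0$ or $H(R)\subseteq Z(R)$. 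In the first alternative, $\xi$ vanishes on $H(R)+S(R)$, and the decomposition $2x=(x+x^{\ast})+(x-x^{\ast})$ combined with 2-torsion freeness of the field $C$ forces $\xi=0$, hence $\delta=0$.

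The main obstacle I foresee is the bookkeeping introduced by the $y^{\ast}$ and $x^{\ast}$ appearing in (D): unlike the pure Jordan-derivation analogue of Theorem \ref{Theorem-1}, the central residue after substituting the commuting decomposition depends jointly on $x+x^{\ast}$ and $xx^{\ast}$, so one must test the derived identity separately on $H(R)$ and $S(R)$, and invoke Lemmas \ref{L-7} and \ref{L-8} at precisely the right junctures to translate each degenerate subcase into the $s_{4}$-identity.
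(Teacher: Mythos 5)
Your proposal follows essentially the same skeleton as the paper's argument: additivity via Corollary \ref{cor-02} (Lemma \ref{lemma-2}), Lemma \ref{Fact-1} to upgrade to $[\delta(x),x]=0$, Lemma \ref{L-4} to write $\delta(x)=\lambda x+\xi(x)$, the pivotal relation $\lambda[k^{2},z]=0$ for $k\in S(R)$, and the same dichotomy $\lambda=0$ versus $k^{2}\in Z(R)$ for all skew $k$. Your local deviations are harmless or even tidier: you extract $\lambda[k^{2},z]=0$ directly from the one-variable form of (D) instead of the paper's substitution $y=k^{2}$ in the polarized identity, and in the $\lambda=0$ case you polarize and test on $H(R)$ and $S(R)$ separately, closing with $2x=(x+x^{\ast})+(x-x^{\ast})$, whereas the paper runs Brauer's trick on all of $R$ from $\sigma(x)[x+x^{\ast},y]=0$; both routes are correct and both end at Lemma \ref{L-8}.

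The one step you should repair is the subcase $k^{2}\in Z(R)$ for all $k\in S(R)$, which you dispose of by Lemma \ref{L-7}. Under the usual Herstein--Lanski convention, $S(R)^{2}$ denotes the additive subgroup generated by all products of two skew elements, and centrality of the skew squares does not yield $[S(R)^{2},S(R)^{2}]=(0)$: in $M_{2}(F)$ with the symplectic involution every skew (trace-zero) element has central square by Cayley--Hamilton, yet $S(R)^{2}$ is all of $M_{2}(F)$, which is not commutative. (This is not a counterexample to your conclusion, since $M_{2}(F)$ satisfies $s_{4}$, but it shows the hypothesis of Lemma \ref{L-7} is not verified by what you have.) Unless you justify reading $S(R)^{2}$ as the set of squares, replace this appeal by the paper's device: from $0=[k^{2},x]=d_{x}(k)k+kd_{x}(k)$, where $d_{x}$ is the inner derivation induced by $x$, Lemma \ref{L-9} gives for each $x$ either that $R$ satisfies $s_{4}$ or $d_{x}=0$; since $R$ is noncommutative some $d_{x}\neq0$, so $R$ satisfies $s_{4}$ and Lemma \ref{F-1} finishes as you intended. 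With that substitution your proof is complete.
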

\begin{proof}
Let us assume that $[\delta(x),x]\in Z(R)$ for all $x\in R.$ By Lemma \ref{Fact-1} and Lemma \ref{lemma-2}, we conclude that
\begin{equation}\label{eq-38}
[\delta(x),x]=0\mbox{~for~all~}x\in R.
\end{equation}
Polarizing (\ref{eq-38}), we have
\begin{equation}\label{eq-39}
[\delta(x),y]+[\delta(y),x]=0\mbox{~for~all~}x,y\in R.
\end{equation}
Replacing $y$ by $k^{2},$ where $k\in S(R)$ in (\ref{eq-39}), we arrive at
\begin{equation}\label{eq-40}
[\delta(x), k^{2}]+[-\delta(k)k+k\delta(k),x]=0\mbox{~for~all~}x\in R, \;k \in S(R).
\end{equation}
Since $\delta$ is a commuting and additive function thereby using Lemma \ref{L-4}, there exists $\lambda \in C$ and an additive mapping $\sigma:R\to C$ such that
\begin{equation}\label{eq-40.1}
\delta(x)=\lambda x +\sigma(x)\mbox{~for~all~}x\in R.
\end{equation}
With this, it follows from (\ref{eq-40}) that
\[
[\lambda x,k^{2}]+[-(\lambda k+\sigma(k))k+k(\lambda k+\sigma(k)),x]=0\mbox{~for~all~}x\in R, \;k \in S(R).
\]
In fact, we have
\begin{equation}\label{eq-41}
\lambda[ x, k^{2}]=0\mbox{~for~all~}x\in R, \;k \in S(R).
\end{equation}
Since the center of a prime ring contains no proper zero divisors, it follows that either $\lambda=0$ or $k^{2}\subseteq Z(R)\mbox{~for~all~}k \in S(R)$. In the latter case, we have $0=[k^{2},x]=[k,x]k+k[k,x]$ for all $x\in R$ and $k\in S(R).$ It can be seen as $d_{x}(k)k+kd_{x}(k)=0$ for all $k \in S(R)$, where $d_{x}(k)=[k,x]$ is the inner derivation induced by $x\in R$. In view of Lemma \ref{L-9}, either $R$ satisfies $s_{4}$ or $d_{x}(t)=0$ for all $t\in R$. Clearly, $d_{x}(t)=[x,t]=0$ for all $x,t\in R$ leads a contradiction. Thus we conclude that $R$ is an order in a central simple algebra of dimension at most $4$ over its center.
\par Now, if $\lambda=0,$ then from (\ref{eq-40.1}), we have $\delta(x)=\sigma(x)$ for all $x\in R$. In view of $(D)$ and torsion condition on $R,$ it implies
\[
\sigma(x^{2})-\sigma(x)x^{\ast}-x\sigma(x)\in Z(R)\mbox{~for~all~}x\in R,
\]
and so
\[
\sigma(x)[x+x^{\ast},y]=0\mbox{~for~all~}x,y\in R.
\]
Using Brauer's trick, we have either $\sigma(x)=0$ for all $x\in R$ or $x+x^{\ast} \in Z(R)$ for all $x\in R$. If $\sigma=0,$ then by (\ref{eq-40.1}) $\delta=0,$ as desired. Now if $x+x^{\ast} \in Z(R)$ for all $x \in R,$ then replacing $x$ by $h\in H(R),$ we get $H(R) \subseteq Z(R).$ Hence by Lemma \ref{L-8}, we are done.
\end{proof}

\begin{theorem}\label{Theorem-4}
Let $R$ be a $2$-torsion free noncommutative prime ring with involution $'\ast'$ that admits a $CE-$Jordan $\ast$-derivation $\delta:R\to R$ such that $[\delta(x),x^{\ast}]\in Z(R)$ for all $x\in R.$ Then either $\delta=0$ or $R$ is an order in a central simple algebra of dimension at most $4$ over its center.
\end{theorem}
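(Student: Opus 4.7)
The plan is to parallel the architecture of Theorem \ref{Theorem-2} step by step, with each appeal to the Jordan derivation axiom $(B)$ replaced by the Jordan $\ast$-derivation axiom $(D)$, and to show that the extra $\ast$-twisted terms in $(D)$ are absorbed cleanly by the same substitutions that worked there. First I would invoke Corollary \ref{cor-02} to obtain additivity, apply Lemma \ref{Fact-2} to upgrade $[\delta(x),x^{\ast}]\in Z(R)$ to $[\delta(x),x^{\ast}]=0$, and take $\ast$ to get $[\delta(x)^{\ast},x]=0$. Lemma \ref{L-4} then yields $\lambda\in C$ and an additive $\sigma\colon R\to C$ with $\delta(x)^{\ast}=\lambda x+\sigma(x)$, equivalently $\delta(x)=\lambda^{\ast}x^{\ast}+\sigma(x)^{\ast}$.

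Following Theorem \ref{Theorem-2}, I would then split into two cases depending on whether some $c\in C$ satisfies $c^{\ast}\neq c$. In Case 1, set $z_{c}=c^{\ast}-c$ and use Lemma \ref{L-2} to pick a dense ideal $J$ with $z_{c}J\subseteq R$. Polarizing $[\delta(x),x^{\ast}]=0$ and substituting $y=h\circ r$ with $h\in H(R)$, $r\in J$, and expanding $\delta(h\circ r)$ via $(D)$, the terms $\delta(h)r^{\ast}$, $h\delta(r)$, $\delta(r)h^{\ast}$, $r\delta(h)$ unfold through the representation of $\delta$. Replacing $r$ by $rz_{c}$, comparing with the previous identity multiplied by $z_{c}$, and using $z_{c}^{\ast}=-z_{c}$, I expect either $[h,x^{\ast}]=0$ for all $h\in H(R)$ and $x\in R$ (so $H(R)\subseteq Z(R)$, and Lemma \ref{L-8} yields $s_{4}$), or the functional relation $\sigma(rz_{c})^{\ast}=\sigma(r)^{\ast}z_{c}$. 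A second iteration with $y\mapsto yz_{c}$ should then isolate the factor $\lambda^{\ast}[r\circ y^{\ast},x^{\ast}]z_{c}=0$, forcing $\lambda=0$ since $R$ is noncommutative. In Case 2, where $c^{\ast}=c$ for all $c\in C$, polarizing and substituting $y=h\circ k$ with $h\in H(R)$, $k\in S(R)$, then applying $\ast$ and adding (using that $\sigma(\cdot)^{\ast}\in C$ is fixed by $\ast$), the $\sigma$-contributions should cancel and leave $2\lambda[k^{2},h]=0$. As in the proof of Theorem \ref{Theorem-2}, this splits into either $\lambda=0$ or a derivation-style identity $d_{h}(k)k+kd_{h}(k)=0$, whereupon Lemma \ref{L-9} combined with Lemma \ref{L-8} delivers $s_{4}$.

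It remains to handle the residual branch $\lambda=0$, where $\delta(x)=\sigma(x)^{\ast}\in C$. Using Remark \ref{rem-2}, the defining identity reads $\sigma(x^{2})^{\ast}-\sigma(x)^{\ast}x^{\ast}-x\sigma(x)^{\ast}\in Z(R)$, and since $\sigma(x)^{\ast}$ is central, taking the commutator with an arbitrary $y\in R$ gives
\[
\sigma(x)^{\ast}[x+x^{\ast},y]=0\quad\text{for all }x,y\in R.
\]
Primeness together with Brauer's trick on the subgroups $\{x:\sigma(x)=0\}$ and $\{x:x+x^{\ast}\in Z(R)\}$ then forces either $\sigma=0$ (hence $\delta=0$) or $x+x^{\ast}\in Z(R)$ for all $x$; restricting the latter to $H(R)$ and invoking $2$-torsion freeness yields $H(R)\subseteq Z(R)$, and Lemma \ref{L-8} again produces $s_{4}$. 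The step I expect to be the main obstacle is the bookkeeping in Case 1: condition $(D)$ contributes two extra $\ast$-twisted summands compared with $(B)$, and verifying that the successive substitutions $r\mapsto rz_{c}$ and $y\mapsto yz_{c}$ marshal these extras via $z_{c}^{\ast}=-z_{c}$ so that all $\sigma$-contamination cancels at precisely the same point as in Theorem \ref{Theorem-2} will require careful sign tracking across two or three successive replacements.
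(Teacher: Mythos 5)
Your opening reduction (additivity via Corollary \ref{cor-02}, Lemma \ref{Fact-2} to get $[\delta(x),x^{\ast}]=0$, Lemma \ref{L-4} giving $\delta(x)=\lambda^{\ast}x^{\ast}+\sigma(x)^{\ast}$) and your $\lambda=0$ endgame (from $(D)$: $\sigma(x)^{\ast}[x+x^{\ast},y]=0$, Brauer's trick, $H(R)\subseteq Z(R)$, Lemma \ref{L-8}) are correct and coincide with the paper. The gap is the middle step, i.e. the reduction to ``$\lambda=0$ or $s_{4}$'': you propose to transplant the two-case architecture of Theorem \ref{Theorem-2}, but the identities you ``expect'' are not what the $\ast$-twisted axiom $(D)$ actually produces, and the anticipated cancellations fail. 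In your Case 2, substituting $y=h\circ k$ ($h\in H(R)$, $k\in S(R)$, $c^{\ast}=c$ on $C$) and expanding via $(D)$ gives $-\delta(h)k+h\delta(k)+\delta(k)h+k\delta(h)\equiv -2\lambda hk+2\sigma(k)h \pmod{Z(R)}$, so the polarized relation becomes $-\lambda[[h,k],x^{\ast}]+2\sigma(k)[h,x^{\ast}]=0$, not the analogue of (\ref{eq-23}); at $x=k$ this is $\lambda[[h,k],k]-2\sigma(k)[h,k]=0$, and since $[h,k]^{\ast}=[h,k]$ and $[[h,k],k]^{\ast}=[[h,k],k]$, applying $\ast$ returns the same equation, so the conjugate-and-add trick does not cancel the $\sigma$-term and does not leave $2\lambda[k^{2},h]=0$. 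In your Case 1 the same problem appears: after substituting $y=h\circ r$ the surviving identity is $\lambda^{\ast}[hr^{\ast}+rh,x^{\ast}]+\sigma(h)^{\ast}[r+r^{\ast},x^{\ast}]+2\sigma(r)^{\ast}[h,x^{\ast}]=0$, and replacing $r$ by $rz_{c}$ flips the sign only of the terms involving $r^{\ast}$, so neither subtracting nor adding the $z_{c}$-multiplied identity isolates $\bigl(\sigma(rz_{c})^{\ast}-\sigma(r)^{\ast}z_{c}\bigr)[h,x^{\ast}]$; leftover terms in $r^{\ast}$ remain. So precisely the bookkeeping you flagged as the main obstacle is where the argument breaks, and as written the proof is incomplete.

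The paper avoids this entirely and needs no case split on how $\ast$ acts on $C$: in the polarized identity $[\delta(x),y^{\ast}]+[\delta(y),x^{\ast}]=0$ it takes $y=k^{2}$ with $k\in S(R)$. Then $(k^{2})^{\ast}=k^{2}$, and by Remark \ref{rem-2}, $\delta(k^{2})\equiv \delta(k)k^{\ast}+k\delta(k)=-\delta(k)k+k\delta(k)=[k,\delta(k)]\pmod{Z(R)}$, which vanishes once $\delta(k)=-\lambda^{\ast}k+\sigma(k)^{\ast}$ with $\lambda,\sigma(k)\in C$. This yields $\lambda^{\ast}[x^{\ast},k^{2}]=0$ for all $x\in R$, $k\in S(R)$ in one stroke; hence either $\lambda=0$ (and one finishes exactly as in Theorem \ref{Theorem-3}, which is the endgame you already have) or $k^{2}\in Z(R)$ for all $k\in S(R)$, in which case the inner-derivation argument with Lemma \ref{L-9} gives $s_{4}$. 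If you want to salvage your route, note that your Case 2 relation does give $\lambda[[h,k],h]=0$ at $x^{\ast}=h$, but turning ``every inner derivation $d_{k}$, $k\in S(R)$, commutes on $H(R)$'' into $s_{4}$ requires results not among the lemmas quoted here; the $y=k^{2}$ substitution is the intended shortcut.
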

\begin{proof}
Let us consider $[\delta(x),x^{\ast}]\in Z(R)$ for all $x\in R.$ In light of Lemma \ref{Fact-2} and Lemma \ref{lemma-2}, we may infer that
\begin{equation}\label{eq-42}
[\delta(x),x^{\ast}]=0\mbox{~for~all~}x \in R.
\end{equation}
This implies that
\begin{equation}\label{eq-43}
[\delta(x)^{\ast},x]=0\mbox{~for~all~}x \in R.
\end{equation}
Using Lemma \ref{L-4} in Eq. (\ref{eq-43}), there exists $\lambda \in C$ and an additive mapping $\sigma : R \to C$ such that
\[
\delta(x)^{\ast}=\lambda x+\sigma(x)\mbox{~for~all~}x \in R,
\]
and equivalently we have
\begin{equation}\label{eq-44}
\delta(x)=\lambda^{\ast}x^{\ast}+\sigma(x)^{\ast}\mbox{~for~all~}x \in R.
\end{equation}
Polarizing (\ref{eq-42}), we find
\[
[\delta(x),y^{\ast}]+[\delta(y),x^{\ast}]=0\mbox{~for~all~}x,y \in R.
\]
Replacing $y$ by $k^{2}$ in the last expression, where $k \in S(R),$ we get
\[
[\delta(x), k^{2}]+[-\delta(k)k+k\delta(k),x^{\ast}]=0.
\]
Using (\ref{eq-44}), it yields
\begin{equation}\label{eq-46}
[\lambda^{\ast} x^{\ast}, k^{2}]+[-(\lambda^{\ast} k^{\ast}+\sigma(k))k+k(\lambda^{\ast} k^{\ast}+\sigma(k)),x^{\ast}]=0\mbox{~for~all~}x \in R,~k \in S(R).
\end{equation}
Since $\lambda \in C$ and $\sigma(R)\subseteq C,$ we obtain
\[
\lambda^{\ast}[ x^{\ast}, k^{2}]=0\mbox{~for~all~}x \in R,~k \in S(R).
\]
Further, a similar demonstration that is given in the proof of Theorem \ref{Theorem-3} ensures the conclusion.
\end{proof}

\section*{Conflict of interest}
Here we declare that there is no conflict of interest in this work.


\end{document}